\newfont{\msbm}{msbm10 at 11pt}
\newfont{\msbmsm}{msbm10 at 8pt}
\newtheorem{Theo}{Theorem}
\newtheorem{Lemma}[Theo]{Lemma}
\numberwithin{equation}{section}
\begin{document}
\title{The Yule-Kingman Nested Coalescent: Distribution of the Number of Lineages}
\author{Toni Gui}
\maketitle

\vspace{-.3in}
\begin{abstract}
We consider a model of a population in which individuals are sampled from different species. The Yule-Kingman nested coalescent describes the genealogy of the sample when each species merges with another randomly chosen species with a constant rate $c$ and each pair of lineages belonging to the same species coalesces independently at rate 1. We study the distribution of the number of individual lineages belonging to a given species and show that it converges weakly to a distribution $\mu_c^*$, which is the unique solution to a recursive distributional equation. Furthermore, the average number of individual lineages belonging to each species converges in probability to the mean of $\mu_c^*$.  \\

Key words: gene tree, nested coalescent, Kingman's coalescent, recursive distributional equation.

\end{abstract}


\section{Introduction}
Coalescent theory is the study of random processes in which particles, typically representing individual genes, can merge over time. Coalescent processes can help describe the genealogy of a population and are used to study how sampled genes originated from a common ancestor. Coalescent theory was developed in the early 1980s, and Kingman's work \cite{kingman_1982} is of primary importance. 

Kingman's coalescent \cite{KINGMAN1982235} is a famous stochastic process of coalescence. It traces back the ancestry of a gene sampled from a population which has a constant size through generations and undergoes neutral selection. 
Kingman's coalescent is a strong Markov process taking values in the set of the partitions of $\mathbb{N}^+=\{1,2,\ldots\}$. It initially starts from $\{\{1\},\{2\},\ldots\}$ and each pair of blocks merges independently at rate 1. A strong Markov process is called Kingman's $n$-coalescent if it starts from $\{\{1\},\ldots,\{n\}\}$ and each pair of blocks merges independently at rate 1.
Some results about Kingman's coalescent and its variants and their applications to theoretical population genetics are summarized in \cite{berestycki2009recent}. A striking feature of Kingman's coalescent is that it comes down from infinity. That is, starting from an infinite number of blocks, the number of blocks is almost surely finite after any positive amount of time in Kingman's coalescent. More formally, let $K_{\infty}(t)$ denote the number of blocks surviving to time $t$ in Kingman's coalescent. Then $P(K_{\infty}(t)<\infty\text{ for all } t>0)=1$.
Theorem 1 of \cite{Berestycki_2010} states that $\lim_{t\to0}tK_{\infty}(t)=2$ almost surely, which implies that the number of blocks in the Kingman's coalescent decays as $2/t$. Theorem 2 of \cite{Berestycki_2010} states that 
$$\lim_{s\to0}E\left[\sup_{t\in(0,s]}\left|\frac{tK_{\infty}(t)}{2}-1\right|\right]=0,$$
which implies that for any $\epsilon>0$, and for all $t$ sufficiently small,
$$E[K_{\infty}(t)]<\frac{2+2\epsilon}{t}.$$
Since $K_{\infty}(t)$ decreases as $t$ increases, we have $E[K_{\infty}(t)]$ is finite for all $t>0$.

A limitation of Kingman's coalescent is that it is only suitable for approximating the genealogy of a sample from one species, but cannot be used to explore the relationships among species. The problem of how to reconstruct species trees from gene trees has been studied, for example, in \cite{10.2307/2413694}. There has been a rapid growth in the mathematical theory for estimating species trees in recent years, which has been surveyed in \cite{doi:10.1146/annurev-ecolsys-012121-095340}.

The multispecies coalescent model is an extension of the coalescent for a single population to a process on a tree of populations that splits when speciation events occur \cite{steel2016phylogeny}. The species trees in multispecies coalescent models are fixed or drawn from some probability distribution. To show a much wider class of Markov models for trees within trees, nested coalescents were introduced in \cite{blancas2018} to model the joint dynamics of genes and species. The nested coalescent describes the genealogy of a population at the level of both individuals and species. The gene tree describes the coalescence of lineages within a species. The species tree describes the coalescence of different species.

The nested Kingman coalescent is one example of a nested coalescent model, where both the species tree and the gene tree are given by Kingman's coalescent: each pair of species merges independently at some positive constant rate $c$ and each pair of individual lineages belonging to the same species merges independently at rate 1. Recall that one of Kingman's coalescent's striking features is that it descends from infinity. The nested Kingman coalescent also has this feature.  
It was proved in \cite{blancas2019} that the number of total lineages decays as $2\gamma/ct^2$ at time $t$ when $t\to0$, where $\gamma$ is a constant derived from a recursive distributional equation.
In addition to the number of total lineages, the distribution of species mass, which is the number of individual lineages in one species, was studied by solving a McKean-Vlasov equation in \cite{lambert2020coagulation}. The result shows that the typical mass of a species is $O(t^{-1})$ at time $t$ when $t\to0$.

In this article we study the Yule-Kingman nested coalescent model. 
The species tree in the model is a Yule tree. The Yule process is a simple birth process that models a population in which each individual gives birth independently at some constant rate $c$. It is a standard and commonly used model to describe phylogeny at the species level \cite{steel2016phylogeny}. For the Yule-Kingman coalescent, we study the distribution of the number of individual lineages belonging to one species and show the existence and uniqueness of the distribution $\mu_c^*$ that satisfies some recursive distributional equation. We prove that the distribution of the number of individual lineages in each species converges weakly to $\mu_c^*$. Furthermore, the average number of individuals in one species converges in probability to the mean of $\mu_c^*$. When $c=1$, the distribution can be solved explicitly. 

Kingman's coalescent is a special case of the $\Lambda$-coalescent, which is also known as a coalescent with multiple collisions and was introduced by Pitman in \cite{pitman1999} and Sagitov in \cite{10.2307/3215582}. Some results about the Yule-$\Lambda$ nested coalescent model are shown in \cite{YuleLambdaToni}. 
\begin{figure}[ht]
\centering
\captionsetup{width=.85\linewidth} \includegraphics[width = 80mm]{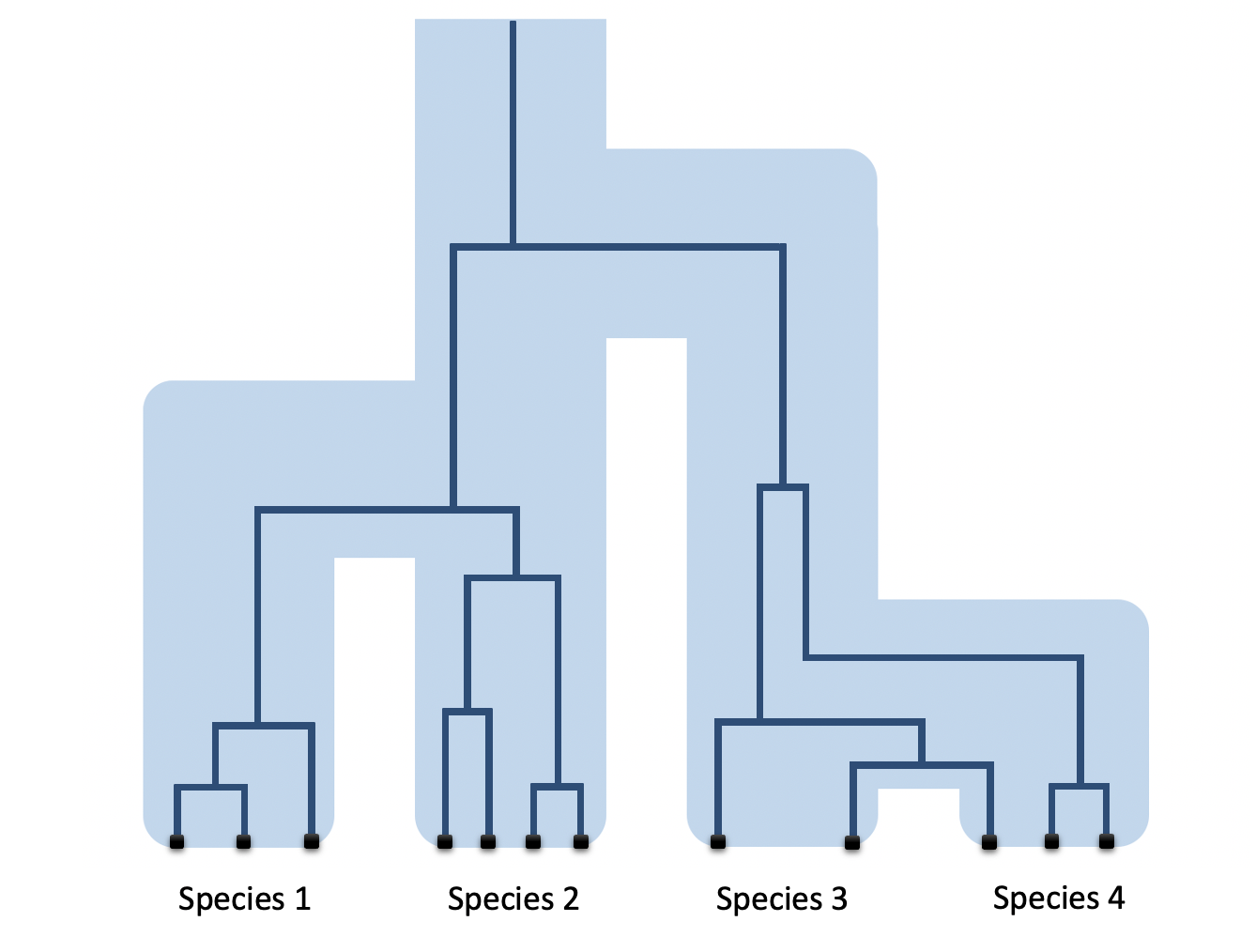}
\caption[A nested coalescent tree which illustrates a model starting from 4 species and 3, 4, 2, and 3 individual lineages in species 1, 2, 3, and 4 respectively.]{A nested coalescent tree which illustrates a model starting from 4 species and 3, 4, 2, and 3 individual lineages in species 1, 2, 3, and 4 respectively. The dark lines present a possible ancestral tree for the sampled individual lineages and the light blue tree in the background shows a possible species tree. Only individual lineages belonging to the same species can merge.}
\label{fig.1}
\end{figure}

The article is organized as follows. We will formally define the model and state the main results in the remainder of this section. Section 2 is devoted to proving the existence and uniqueness of $\mu_c^*$ and section 3 develops other results for the Yule-Kingman nested coalescent.

\subsection{Definition of the Yule-Kingman Nested Coalescent}
The following nested coalescent model, which is similar to the one that appears in \cite{blancas2019}, is considered.
We start with a sample of $s$ species, and $n_k$ individuals are sampled from species $k$ ($n_k$ can be infinite).
Each pair of individuals in a species merges at rate one, and each species has a constant death rate of $c>0$. Once a species dies out, all the individuals of that species will join one of the remaining species with equal probability. We can defined a continuous-time Markov chain to formally describe the process. The Markov chain takes its values in the set of labeled partitions of $\{(m,k)\in\mathbb{Z}\times\mathbb{Z}:1\leqslant m\leqslant n_k,1\leqslant k\leqslant s\}$, in which each block of the partition is labeled with one of the integers $1,\ldots,s$.
Each block represents one lineage, and each label represents one species. At the beginning of the process, there are $\sum_{k=1}^s n_k$ singleton blocks, and block $\{(m,k)\}$ is labeled by $k$. There are two types of transitions:\\

\noindent$\textbf{Lineage mergers:}$ Each pair of blocks with the same label can merge into a single block with rate 1, and the new block will be labeled by the same integer as the two original blocks.\\

\noindent$\textbf{Species mergers:}$ Each label has a constant death rate of $c>0$. If there are no less than two species, once label $j$ dies out, all blocks with label $j$ will have their label changed to $i$, which will be chosen from the remaining labels with equal probability. If there is only one species, once it dies, all blocks vanish simultaneously.\\
 
In a Yule process with birth rate $c>0$, since each branch can generate a new branch with rate $c$, the time between the birth of the $k$th branch and the birth of the $(k+1)$st branch is an exponential random variable with rate $kc$. In our model, since each species has a constant death rate of $c$, the time between the death of the $(k+1)$st to last species and the death of the $k$th to last species is also an exponential random variable with rate $kc$. In a Yule process, the probability for each branch to be the first of the existing branches to generate a new branch is uniform. In our model, the species that dies out will merge with the remaining species with equal probability. Hence if we reverse the direction of the time and redefine the time for the last species in the model to die out as 0, then the evolutionary process of speciation is a Yule process with rate $c$.
Also note that within each species, the lineages merge according to the classical Kingman's coalescent with rate 1. Therefore, we will refer to this model as the nested Yule-Kingman coalescent.

\subsection{Main Results for the Yule-Kingman Nested Coalescent}

Let $S(t)$ be the number of species at time $t$. Notice that the process $(S(t), t\geqslant0)$ is a pure death process with $S(0)=s$. 
Let $N_{k}(0)\geqslant1$ be the number of individuals sampled from species $k$ at time~0, $1\leqslant k\leqslant s$. 
We allow the case in which $N_{k}(0)=\infty$. Each pair of individuals within a species merges at rate 1. Let $N_{k}(t)$ be the number of individuals in species $k$ at time $t$, $1\leqslant k\leqslant s$. Notice that $N_{k}(t)=0$ if species $k$ dies out before time $t$, and there are $S(t)$ species that have a non-zero number of individuals at time $t$.

We use the notation $m_{j}\ll s_{j}$ to mean $\lim_{j\to\infty}m_{j}/s_{j}\to0$. We use $\to_{d}$ and $\to_{p}$ to denote convergence in distribution and convergence in probability respectively. We use $\text{Exp}(c)$ to denote the exponential distribution with rate parameter $c$. We use $[n]$ to denote the set of integers $\{1,\ldots,n\}$. Let $K_{n}(t)$ denote the number of lineages at time $t$ in Kingman's $n$-coalescent. In other words, $K_{n}(t)$ is the number of individual lineages surviving to time $t$ if there are $n$ individuals at time 0 and each pair of individuals merges independently at rate 1. The main results about the nested Yule-Kingman coalescent are stated in the following theorems.

\begin{Theo}\label{existunique}
For any $c>0$, there exists a unique distribution $\mu^*_c$ on $\mathbb{N}^+$ such that independent random variables $W,W_1,W_2$, all with distribution $\mu_c^*$, satisfy the recursive distributional equation 
\begin{equation}\label{First}
W=_{d}K_{W_{1}+W_{2}}(Y),
\end{equation}
where $Y\sim \text{Exp}(c)$ is independent of $W_1$ and $W_2$.
\end{Theo}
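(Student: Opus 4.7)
The plan is to cast the RDE as a fixed-point problem. Define the map $T: \mathcal{P}(\mathbb{N}^+) \to \mathcal{P}(\mathbb{N}^+)$ by letting $T(\mu)$ be the law of $K_{W_1+W_2}(Y)$, where $W_1, W_2$ are iid with law $\mu$ and $Y \sim \text{Exp}(c)$ is independent of them. A distribution on $\mathbb{N}^+$ solves the RDE \eqref{First} if and only if it is a fixed point of $T$, so the task reduces to showing that $T$ has exactly one fixed point.

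For existence I would iterate $T$ from an arbitrary starting distribution and pass to a subsequential weak limit. Pick any $\mu_0 \in \mathcal{P}(\mathbb{N}^+)$, say $\mu_0 = \delta_1$, and set $\mu_n = T^n \mu_0$. Tightness of $\{\mu_n\}_{n \geq 1}$ follows from the observation that, regardless of $\mu$, the variable $K_{W_1+W_2}(Y)$ is stochastically dominated by $K_\infty(Y)$, which is almost surely finite by the come-down-from-infinity property of Kingman's coalescent. Hence $P_{\mu_n}([M, \infty)) \leq P(K_\infty(Y) \geq M) \to 0$ as $M \to \infty$, uniformly in $n$. Prokhorov's theorem yields a weakly convergent subsequence $\mu_{n_k} \to \mu_c^*$, and a straightforward argument (using that every bounded function on the discrete space $\mathbb{N}^+$ is continuous) shows that $T$ is weakly continuous on $\mathcal{P}(\mathbb{N}^+)$, so $\mu_c^*$ is a fixed point.

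For uniqueness, suppose $\mu$ and $\nu$ are both fixed points, and couple them through a recursive tree process. Take an infinite rooted binary tree whose edges carry iid $\text{Exp}(c)$ labels $\{Y_v\}$. For each depth $n$, build a root value by initializing the depth-$n$ vertices with iid variables and propagating upward via $W = K_{W_1+W_2}(Y)$; initializing with iid $\mu$ (respectively $\nu$) and using the fixed-point property yields root variables $W^\mu_n \sim \mu$ and $W^\nu_n \sim \nu$. By coupling both constructions on the same realization of $\{Y_v\}$, together with a natural coupling of the Kingman dynamics at each internal vertex, I would show that $P(W^\mu_n \neq W^\nu_n) \to 0$ as $n \to \infty$, forcing $\text{TV}(\mu, \nu) = 0$. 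Heuristically this succeeds because the come-down-from-infinity means the output $K_{w_1+w_2}(Y)$ becomes nearly insensitive to $(w_1, w_2)$ when the inputs are large, so information about the leaf initialization is erased as one climbs the tree.

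The principal obstacle is making the above heuristic quantitative. The naive bound $|K_n(Y) - K_m(Y)| \leq |n-m|$ is not contractive, so one must exploit the finer estimates $E[K_\infty(t)] \sim 2/t$ from \cite{Berestycki_2010} combined with the exponential weight $c e^{-ct}$ coming from $Y$. I expect to split according to the size of $Y$: for $Y$ not too small, $K_n(Y)$ has already shed most of its dependence on $n$ and can be coupled identically with $K_m(Y)$; for $Y$ small, careful use of the asymptotic behavior of Kingman's coalescent is needed to control the probability of disagreement. Integrating these estimates over $Y \sim \text{Exp}(c)$ and iterating the resulting bound up the tree to obtain a genuine contraction is where the main technical work will lie.
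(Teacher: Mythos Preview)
Your existence sketch is almost right but has a small hole: from a subsequential limit $\mu_{n_k}\to\mu^*$ and weak continuity of $T$ you only obtain $\mu_{n_k+1}=T\mu_{n_k}\to T\mu^*$, and nothing forces the shifted subsequence to have the same limit $\mu^*$. The paper repairs this with monotonicity: since $n\mapsto K_n(y)$ is nondecreasing, $T$ is monotone for the stochastic order, and because $\delta_1\preceq T\delta_1$ the iterates $T^n\delta_1$ form an \emph{increasing} sequence, so the full sequence converges and the limit is then automatically a fixed point. Since you already start from $\delta_1$, this is a one-line addition.

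The substantive gap is uniqueness. You correctly observe that no naive metric contraction is available, but your proposed remedy---a recursive tree coupling exploiting insensitivity of $K_n(Y)$ to $n$ for large $n$---remains a heuristic, as you yourself concede. The obstruction is that the inputs $W_1+W_2$ are not forced to be large: under any fixed point they equal $2$ with positive probability, and at that scale the natural level-by-level coupling bound $P(K_a(Y)\neq K_b(Y))\le 1-\prod_{k=\min(a,b)+1}^{\max(a,b)}\binom{k}{2}/(\binom{k}{2}+c)$ is nowhere near contractive; iterating it up a binary tree with $2^n$ leaves produces no decay of the disagreement probability. The paper abandons coupling entirely and proceeds analytically. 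From the memoryless property one derives that the generating function $R$ of any fixed point satisfies
\[
R(x)=R(x)^2+\frac{x(1-x)}{2c}R''(x),\qquad R(0)=0,\ R(1)=1,
\]
and monotonicity sandwiches every fixed point between $\mu_1=\lim_n T^n\delta_1$ and $\mu_2=\lim_n T^n\delta_\infty$. The heart of the proof is then several pages of ODE analysis: one passes to $g_i=-\log(1-R_i)$, compares $g_i'$ with solutions of the auxiliary singular equation $h'=h^2+2c/(1-x)$, and proves a delicate uniqueness statement for the latter near $x=1$ that forces $g_1=g_2$ and hence $\mu_1=\mu_2$. If your coupling program can be carried out it would constitute a genuinely different proof, but as written the decisive step---turning the come-down-from-infinity heuristic into an honest contraction over the tree---is simply absent.
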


\begin{figure}[ht]
\centering
\captionsetup{width=.85\linewidth}
\begin{tikzpicture}[scale=0.8]

\draw [thick,black][](80pt,180pt)--(80pt,80pt);
\draw [thick,black][] (80pt,80pt)--(-20pt,-20pt);
\draw [thick,black][](80pt,80pt)--(180pt,-20pt);
\draw [thick,black][] (30pt,30pt)--(80pt,-20pt);
\draw [thick,black][](165pt,-5pt)--(150pt,-20pt);

\draw [thick, dotted,black][] (80pt,180pt)--(130pt,180pt);
\node[mark size=3pt,color=black] at (240pt,130pt) {};
\draw [thick, dotted,black][] (80pt,80pt)--(130pt,80pt);
\node[mark size=3pt,color=black] at (240pt,80pt) {};
\node[mark size=3pt,color=black] at (0pt,180pt) {$W$};
\node[mark size=3pt,color=black] at (-30pt,50pt) {$W_{1}$};
\node[mark size=3pt,color=black] at (185pt,50pt) {$W_{2}$};

\draw [thick, black, ->] (50pt,180pt) -- (75pt,180pt);
\draw [dash dot, black] (-55pt,198pt) -- (50pt,198pt);
\draw [dash dot, black] (-55pt,162pt) -- (50pt,162pt);
\draw [dash dot, black] (-55pt,198pt) -- (-55pt,162pt);
\draw [dash dot, black] (50pt,198pt) -- (50pt,162pt);

\draw [thick, black, ->] (30pt,50pt) -- (74pt,78pt);
\draw [dash dot, black] (-95pt,68pt) -- (30pt,68pt);
\draw [dash dot, black] (-95pt,32pt) -- (30pt,32pt);
\draw [dash dot, black] (-95pt,68pt) -- (-95pt,32pt);
\draw [dash dot, black] (30pt,68pt) -- (30pt,32pt);

\draw [thick, black, ->] (120pt,50pt) -- (86pt,78pt);
\draw [dash dot, black] (120pt,68pt) -- (250pt,68pt);
\draw [dash dot, black] (120pt,32pt) -- (250pt,32pt);
\draw [dash dot, black] (120pt,68pt) -- (120pt,32pt);
\draw [dash dot, black] (250pt,68pt) -- (250pt,32pt);

\node[mark size=2pt,color=black] at (80pt,180pt) {\pgfuseplotmark{*}};
\node[mark size=2pt,color=black] at (80pt,80pt) {\pgfuseplotmark{*}};
\node[mark size=2pt,color=black] at (30pt,30pt) {\pgfuseplotmark{*}};
\node[mark size=2pt,color=black] at (165pt,-5pt) {\pgfuseplotmark{*}};
\draw [decorate,decoration={brace,amplitude=4pt},xshift=0.5cm,yshift=0pt]
      (120pt,180pt) -- (120pt,80pt) node [midway,right,xshift=.1cm] {$Y\sim \text{Exp}(c)$}; 

\end{tikzpicture}
\caption{Illustration of how the numbers of individual lineages belonging to species evolve in the nested coalescent model.}
\label{fig.2}
\end{figure}
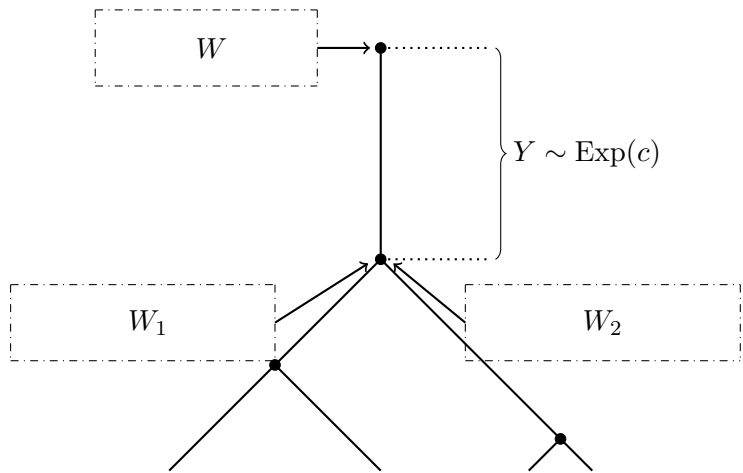

Theorem \ref{existunique} and Figure \ref{fig.2} imply that if the number of individual lineages in each species at each node of the species tree has distribution $\mu^*_c\ast\mu^*_c$, which is the convolution of $\mu^*_c$ with itself, then the number of individuals in each species that is about to merge with another species will have distribution $\mu^*_c$.
From Theorem $\ref{existunique}$ we also have the uniqueness of such a distribution. Section 2 is devoted to proving Theorem \ref{existunique}.

The following theorem illustrates the convergence of the number of individuals in each surviving species to this distribution.

\begin{Theo}\label{Conv}
For a fixed $m$, let $\tau_{m}^{s}=\sup\{t:S(t)=m\}$, which is the first time when the number of species reaches $m-1$ starting from $s$. Denote the left limit $\lim_{t\to(\tau_{m}^{s})^{-}}N_k(t)$ by $N_{k}((\tau_{m}^{s})^{-})$, which represents the number of individuals in species $k$ just before the merger that reduces the number of species from $m$ to $m-1$. Let $\{k_l\}_{l=1}^m\subset[s]$ denote the species that survive to $(\tau_{m}^{s})^{-}$. Then $N_{k_l}((\tau_{m}^{s})^{-})$, $l\in\{1,\ldots,m\}$ are asymptotically independent random variables as $s\to\infty$, and 
\begin{equation*}\label{asymptheo2}
N_{k_l}((\tau_{m}^{s})^{-})\rightarrow_{d}\mu_c^*,~as~s\rightarrow\infty, ~\forall l\in\{1,\ldots,m\}.
\end{equation*}
\end{Theo}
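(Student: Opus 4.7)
My plan is to condition on the species Yule tree and iterate the recursive distributional structure from Theorem~\ref{existunique}. For each $l\in\{1,\ldots,m\}$ let $T_l$ be the subtree of the species tree whose leaves are the initial species whose individuals are all contained in $k_l$ at time $(\tau_m^s)^-$, and write $s_l$ for its number of leaves, so $\sum_l s_l=s$. Because the Kingman coalescents on distinct edges of the species tree are independent, conditional on the species tree the quantities $N_{k_1}((\tau_m^s)^-),\ldots,N_{k_m}((\tau_m^s)^-)$ are mutually independent, and each $N_{k_l}((\tau_m^s)^-)$ is a measurable function of $T_l$ alone (its edge lengths, leaf counts, and the independent coalescents on its edges). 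Asymptotic independence then follows from the Yule branching property: viewed in reverse time on $[\tau_m^s,0]$ the species tree consists of $m$ independent Yule processes of rate $c$, each started from one of the ancestors, so the subtrees $T_l$ are conditionally independent given their sizes, and by a standard exchangeability argument $s_l\to\infty$ in probability for each $l$ as $s\to\infty$.

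For the marginal convergence $N_{k_l}((\tau_m^s)^-)\to_d\mu_c^*$, I would use the contraction underlying Theorem~\ref{existunique}. Let $\Phi$ denote the map on distributions on $\mathbb{N}^+$ given by $\Phi(\mu)=\mathcal{L}(K_{W_1+W_2}(Y))$ with $W_1,W_2\sim\mu$ independent and $Y\sim\text{Exp}(c)$ independent; by Theorem~\ref{existunique} its unique fixed point is $\mu_c^*$, and I expect its proof to yield a contraction/coupling estimate of the form $d(\Phi(\mu),\mu_c^*)\leqslant\alpha\, d(\mu,\mu_c^*)$ with some $\alpha<1$ in a suitable metric $d$ on distributions. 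Traversing $T_l$ from leaves to root, at each internal node the count is obtained by summing the counts of the two children and running Kingman's coalescent along the outgoing edge, whose length is $\text{Exp}(c)$ by the memoryless Yule property; this is essentially one application of $\Phi$ per internal node. Iterating contraction along $T_l$ shows that the root distribution is close to $\mu_c^*$ as soon as the tree is deep enough along every root-to-leaf path, which happens with high probability as $s_l\to\infty$.

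The main obstacle is that the recursion is on a branching tree rather than a single path: at each internal node two independent, and a priori differently distributed, copies are combined. I would handle this by an inductive total-variation estimate of the form ``if both child distributions are within $\delta$ of $\mu_c^*$, then so is the parent within $\alpha\delta$ after one application of $\Phi$'', giving exponential convergence in the depth of $T_l$. Extra care is required for infinite initial counts $N_j(0)=\infty$: along the first edge above each such leaf, the coming-down-from-infinity property of Kingman's coalescent recalled in the introduction makes the count a.s.\ finite with finite mean, so the induction can be started from the first internal node above the leaves and the contraction estimate applied thereafter.
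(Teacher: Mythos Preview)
Your plan has a genuine gap at its core: you assume that the proof of Theorem~\ref{existunique} yields a contraction estimate $d(\Phi(\mu),\mu_c^*)\leqslant\alpha\,d(\mu,\mu_c^*)$ with some $\alpha<1$. It does not. Uniqueness in the paper is obtained by showing that the monotone iterates $F_c^n(\delta_1)$ and $F_c^n(\delta_\infty)$ have the same limit, via an analysis of the probability generating functions and an associated ODE; no metric contraction is ever established, and it is not clear one holds in any convenient metric. Your inductive step ``if both child distributions are within $\delta$ of $\mu_c^*$, then the parent is within $\alpha\delta$'' therefore has no foundation in the paper, and the obstacle you flag---that the two children at each internal node carry different, unknown laws---is exactly what makes a bare contraction argument awkward here.

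The paper bypasses this entirely by a monotone sandwich, which you should adopt. Since $F_c$ is monotone and every distribution on $\mathbb{N}^+\cup\{\infty\}$ lies between $\delta_1$ and $\delta_\infty$, one does not need to track the actual laws along the tree at all: on the event that every root-to-leaf path in each subtree $\mathcal{T}^{l,m}$ has length at least $d+1$ (which the paper shows has probability $\to1$ as $s\to\infty$ via a simple Yule estimate), one can replace the counts at depth $d$ by $1$ from below and by $\infty$ from above, and then the root count $N_{k_l}((\tau_m^s)^-)$ is stochastically sandwiched between independent samples from $F_c^{d+1}(\delta_1)$ and $F_c^{d+1}(\delta_\infty)$. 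Both sequences converge weakly to $\mu_c^*$ by the existence/uniqueness argument, which gives both the marginal convergence and, since the bounding variables across different $l$ are independent, the asymptotic independence. This also absorbs the $N_k(0)=\infty$ case without any separate coming-down-from-infinity step.
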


Note that the hypotheses of Theorem \ref{Conv} require $s\to\infty$, but the values of $N_k(0)$, $1\leqslant k\leqslant s$, will not affect the limit as long as they are positive, and they can even be infinity. In addition to the convergence of the number of the individuals in each surviving species, we also have the convergence of the average number of individuals per surviving species, which is stated below.

\begin{Theo}\label{yulekingmantheo31}
Suppose $1\ll m_{j}\ll s_{j}$. Then 
$$\frac{1}{m_{j}}\sum_{k=1}^{s_{j}}N_{k}\left(\left(\tau_{m_{j}}^{s_{j}}\right)^{-}\right)\to_{p} \sum_{i=1}^\infty i\mu_c^*(\{i\}).$$
\end{Theo}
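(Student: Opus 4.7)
The plan is to combine Theorem \ref{Conv} with a weak law of large numbers over the $m_j$ surviving species, using truncation to cope with possibly unbounded tails. Since dead species contribute zero summands, $\sum_{k=1}^{s_j} N_k((\tau_{m_j}^{s_j})^-) = \sum_{l=1}^{m_j} N_{k_l}((\tau_{m_j}^{s_j})^-)$, so the left-hand side of the claim is the empirical mean of the $m_j$ surviving species counts, and by the symmetry of the Yule-Kingman model in the species labels these counts are exchangeable for each fixed $(s_j, m_j)$.

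Fix $M \in \N^+$ and set $X_l^{(j)} := N_{k_l}((\tau_{m_j}^{s_j})^-) \wedge M$. By exchangeability,
\begin{equation*}
\Var\!\left(\frac{1}{m_j}\sum_{l=1}^{m_j} X_l^{(j)}\right)
= \frac{1}{m_j}\,\Var\!\left(X_1^{(j)}\right) + \frac{m_j-1}{m_j}\,\textup{Cov}\!\left(X_1^{(j)}, X_2^{(j)}\right).
\end{equation*}
Writing $W \sim \mu_c^*$, Theorem \ref{Conv} gives $X_1^{(j)} \to_d W \wedge M$ and, for $W'$ an independent copy of $W$, $(X_1^{(j)}, X_2^{(j)}) \to_d (W \wedge M,\, W' \wedge M)$, so $E[X_1^{(j)}] \to E[W \wedge M]$, $\Var(X_1^{(j)}) \leq M^2$, and $\textup{Cov}(X_1^{(j)}, X_2^{(j)}) \to 0$ as $s_j \to \infty$. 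Since $m_j \to \infty$, Chebyshev's inequality yields $\frac{1}{m_j}\sum_l X_l^{(j)} \to_p E[W \wedge M]$, and monotone convergence gives $E[W \wedge M] \uparrow \sum_{i=1}^\infty i\,\mu_c^*(\{i\})$ as $M \to \infty$, which is finite by moment control on $\mu_c^*$ derivable alongside the proof of Theorem \ref{existunique}.

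To conclude I must control the truncation error $R_M^{(j)} := \frac{1}{m_j}\sum_l \bigl(N_{k_l}((\tau_{m_j}^{s_j})^-) - X_l^{(j)}\bigr)$, showing that it is small in probability as $M \to \infty$, uniformly in $j$. By exchangeability, $E[R_M^{(j)}] = E\bigl[N_{k_1}((\tau_{m_j}^{s_j})^-);\, N_{k_1}((\tau_{m_j}^{s_j})^-) > M\bigr]$, so what is needed is uniform integrability of the sequence $\{N_{k_1}((\tau_{m_j}^{s_j})^-)\}_j$. This is the principal obstacle. I would establish it by proving a uniform bound on a higher moment, $\sup_j E\bigl[N_{k_1}((\tau_{m_j}^{s_j})^-)^{1+\delta}\bigr] < \infty$, via induction up the species tree: one step of the recursion \eqref{First} expresses the count in a surviving species just before its next species-merger as $K_{N+N'}(Y)$ for $Y \sim \textup{Exp}(c)$ independent of $N, N'$, and combining this with the bound $E[K_\infty(t)] \leq (2+2\epsilon)/t$ for small $t$ from \cite{Berestycki_2010} (applied on the event that $Y$ is bounded away from 0, which has positive probability) should propagate a moment bound through successive levels. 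A secondary technical issue is that Theorem \ref{Conv} is stated for fixed $m$, so I would verify from its proof that the asymptotic pairwise independence of two surviving counts persists when the two species are tracked among $m_j \to \infty$ surviving ones, which should be unproblematic under the assumption $m_j \ll s_j$.
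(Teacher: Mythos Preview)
Your truncation-plus-Chebyshev step is sound: once you observe that only two subtrees are involved, the proof of Theorem~\ref{Conv} carries over verbatim to $m_j\to\infty$ (the event $A$ there can be replaced by the event that just those two subtrees have branched $\bar d$ times before $\tau_{m_j}^{s_j}$, and the union bound is then over $2\cdot 2^{\bar d}$ terms, independent of $m_j$). So the ``secondary technical issue'' is indeed routine.

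The genuine gap is the uniform integrability of $N_{k_1}\bigl((\tau_{m_j}^{s_j})^-\bigr)$. Your sketch does not establish it. One application of the recursion starting from the worst case gives the law $F_c(\delta_\infty)$ of $K_\infty(Y)$, whose mass at $i$ is asymptotic to a constant times $i^{-2}$ (see the computation around \eqref{X1}), so its mean is \emph{infinite}; conditioning on $\{Y>\epsilon\}$ gives a finite mean only on that event and says nothing on $\{Y\le\epsilon\}$, which has positive probability. Thus a single step of your ``induction up the species tree'' does not propagate any moment bound, and the induction has no valid base case when $N_k(0)$ may equal $\infty$. What is needed is control on how many levels of species branching have occurred above the tagged species, together with a separate bound on the event that few levels have occurred; your sketch supplies neither.

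The paper's proof takes a different route that sidesteps UI of $N_{k_l}$ altogether. It uses the independence of the $m_j$ Yule subtrees to couple each $N_{k_l}$ directly with independent $X_l^{d+1}\sim F_c^{d+1}(\delta_1)$ and $Y_l^{d+1}\sim F_c^{d+1}(\delta_\infty)$ on the event that subtree $l$ has branched at least $d$ times by a fixed time~$\bar u$; the weak law is then applied to these i.i.d.\ bounding variables rather than to the $N_{k_l}$ themselves. The small fraction of ``bad'' subtrees is handled via a coarser bound $W_l$ with finite mean (essentially $K_\infty(\bar u)$ summed over the species present at time $\bar u$), and the passage $d\to\infty$ uses uniform integrability of the family $\{Y_l^{d}\}_{d\ge 2}$, which follows immediately from the explicit tail bound~\eqref{Xinfinitynew1} on $F_c^2(\delta_\infty)$ together with the monotonicity $F_c^{d}(\delta_\infty)\preceq F_c^2(\delta_\infty)$. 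This is precisely the piece your argument is missing: the UI you need is not of $N_{k_l}$ but of the stochastically dominating family $F_c^{d}(\delta_\infty)$, and that comes for free from Lemma~\ref{3mono} and \eqref{Xinfinitynew1}.
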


Section 3 is devoted to proving Theorem \ref{Conv} and Theorem \ref{yulekingmantheo31}.
The distribution $\mu_c^*$ for the special case $c=1$ will be presented in section 3.4.

\section{Existence and Uniqueness of the Solution to the RDE (\ref{First})} \label{chap2ENU1.5} 
This section is devoted to proving Theorem $\ref{existunique}$. 
Let $\mathcal{S}$ be the set of probability distributions on $\mathbb{N}^+\cup\{\infty\}$, and let $\mathcal{S}_{1}$ be the set of probability distributions on $\mathbb{N}^+$ with finite mean.
Let $F_c:\mathcal{S}\to\mathcal{S}$ be the mapping defined such that $F_c(\mu)$ is the distribution of 
\begin{equation}\label{dist}
K_{W_{1}+W_{2}}(Y),
\end{equation}
where $W_{1}$ and $W_{2}$ have distribution $\mu$, $Y$ has distribution $\text{Exp}(c)$, and the random variables $Y$, $W_{1}$ and $W_{2}$ are independent. Then the recursive distributional equation $(\ref{First})$ is equivalent to 
\begin{equation}\label{Frecur1}
F_c(\mu_c^*)=\mu_c^*,	
\end{equation}
where $\mu_c^*$ is the distribution of $W$. Let $F_c^{n}: \mathcal{S}\to\mathcal{S}$ be the map obtained by iterating $n$ times the map $F_c$. Let $\delta_1$ denote the unit mass at 1 and let $\delta_{\infty}$ denote the unit mass at $\infty$. Consider the usual stochastic partial order $\preceq$ on $\mathcal{S}$:
$$\mu_1\preceq\mu_2\text{ iff }\mu_1[0,x]\geqslant\mu_2[0,x], ~\forall x\in[0,\infty].$$
We say $\mu_2$ stochastically dominates $\mu_1$ if $\mu_1\preceq\mu_2$. We will use the notation $X_1\preceq X_2$ if the distribution of $X_2$ stochastically dominates the distribution of $X_1$. 
We say $F_c$ is monotone if 
$$\mu_1\preceq\mu_2\text{  implies  } F_c(\mu_1)\preceq F_c(\mu_2).$$

\subsection{Existence of the Solution to the RDE}\label{chap2.1existence}
\begin{Lemma}\label{3mono}
We have $F_c(\mathcal{S}_{1})\subset\mathcal{S}_{1}$, and $F_c^{2}(\mathcal{S})\subset\mathcal{S}_{1}$. Also, $F_c$ is monotone. 
\end{Lemma}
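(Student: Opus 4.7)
My plan is to handle the three assertions separately. Monotonicity of $F_c$ is straightforward: the natural coupling in which a Kingman $n$-coalescent is obtained by restricting a Kingman $n'$-coalescent ($n'\geqslant n$) to a size-$n$ subset of its initial blocks shows that $n\mapsto K_n(t)$ is stochastically nondecreasing, and combined with the standard monotone coupling of samples from $\mu_1\preceq\mu_2$ this immediately yields $F_c(\mu_1)\preceq F_c(\mu_2)$. The inclusion $F_c(\mathcal{S}_1)\subset\mathcal{S}_1$ is also quick: the trivial bound $K_n(t)\leqslant n$ gives $E[K_{W_1+W_2}(Y)]\leqslant 2E_\mu[W]<\infty$ for $\mu\in\mathcal{S}_1$, and $K_n(t)\geqslant 1$ for $n\geqslant 1$ ensures the image is supported on $\mathbb{N}^+$.

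The main content is $F_c^2(\mathcal{S})\subset\mathcal{S}_1$. A single iteration does not suffice: for $\mu=\delta_\infty$ the law $F_c(\mu)$ is that of $K_\infty(Y)$, and $E[K_\infty(Y)]=\int_0^\infty E[K_\infty(y)]ce^{-cy}\,dy=\infty$ because $E[K_\infty(y)]\sim 2/y$ as $y\to 0^+$, by the result cited in the introduction. My strategy is that a second iteration effectively convolves the $\text{Exp}(c)$ clock with another independent $\text{Exp}(c)$ clock, producing a $\text{Gamma}(2,c)$ density $c^2ze^{-cz}$ whose extra factor of $z$ at the origin cancels the $1/z$ singularity of $E[K_\infty(z)]$.

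To implement this, set $\nu:=F_c(\mu)$ and realize $W_i'\sim\nu$ as $W_i'=K_{V_{i,1}+V_{i,2}}(Y_i)$, $i=1,2$, using independent $V_{i,j}\sim\mu$, $Y_i\sim\text{Exp}(c)$, and independent Kingman coalescents; let $Y'\sim\text{Exp}(c)$ be a further independent clock. I would use two ingredients. First, a sub-additivity inequality: for $A,B\in\mathbb{N}^+\cup\{\infty\}$ independent of a Kingman coalescent on $[A+B]$, restricting the partition $\pi(t)$ to $\{1,\dots,A\}$ and $\{A+1,\dots,A+B\}$ yields partitions that are marginally Kingman coalescents of those sizes, and since every block of $\pi(t)$ is counted by at least one restriction, $E[K_{A+B}(Y')]\leqslant E[K_A(Y')]+E[K_B(Y')]$. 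Second, the Markov property of Kingman's coalescent in the form $K_{K_n(Y_1)}(Y')\stackrel{d}{=}K_n(Y_1+Y')$ for $n\in\mathbb{N}^+\cup\{\infty\}$ and independent $Y_1,Y'$. Applying sub-additivity to $A=W_1'$ and $B=W_2'$, then the Markov identity, and finally the uniform bound $E[K_n(z)]\leqslant E[K_\infty(z)]$ gives
\begin{equation*}
E[K_{W_1'+W_2'}(Y')]\leqslant 2E\bigl[K_{V_{1,1}+V_{1,2}}(Y_1+Y')\bigr]\leqslant 2\int_0^\infty E[K_\infty(z)]c^2 ze^{-cz}\,dz,
\end{equation*}
where $Y_1+Y'\sim\text{Gamma}(2,c)$. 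The integrand is bounded by $(2+2\epsilon)c^2e^{-cz}$ near $z=0$ and decays exponentially at infinity, so the integral is finite. That $F_c^2(\mu)$ is supported on $\mathbb{N}^+$ follows automatically because Kingman's coalescent comes down from infinity.

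The main obstacle is the $1/z$ blow-up of $E[K_\infty(z)]$ at the origin, which makes a one-step bound impossible; the essential trick is that composing two iterations effectively promotes $\text{Exp}(c)$ to $\text{Gamma}(2,c)$ via the Markov property, and the extra factor of $z$ in the Gamma density is exactly what restores integrability at $z=0$.
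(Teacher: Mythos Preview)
Your proof is correct, and your handling of the main assertion $F_c^2(\mathcal{S})\subset\mathcal{S}_1$ follows a genuinely different route from the paper's. The paper reduces, as you do, to showing $F_c^2(\delta_\infty)\in\mathcal{S}_1$, but then proceeds combinatorially: using the explicit race-of-exponentials formula $P(K_j(Y)=i)=\frac{c}{\binom{i}{2}+c}\prod_{l=i+1}^j\frac{\binom{l}{2}}{\binom{l}{2}+c}$, it derives the tail bound $P(X_1\geqslant i)\leqslant 2c/(i-1)$ for $X_1\sim F_c(\delta_\infty)$, then $P(X_1+\tilde X_1\geqslant i)\leqslant 8c/(i-2)$, and finally multiplies these to obtain $P(X_2\geqslant i)\leqslant 16c^2/((i-1)(i-2))$ for $X_2\sim F_c^2(\delta_\infty)$, which is summable. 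Your argument instead exploits sub-additivity of the Kingman block count under restriction together with the Markov identity $K_{K_n(Y_1)}(Y')\stackrel{d}{=}K_n(Y_1+Y')$, so that two iterations of $F_c$ effectively replace the $\text{Exp}(c)$ clock by a $\text{Gamma}(2,c)$ clock whose density has an extra factor of $z$ at the origin; this precisely cancels the $1/z$ singularity of $E[K_\infty(z)]$ and yields finiteness of the mean without any tail computation. Your approach is cleaner and more conceptual, and it explains \emph{why} two iterations are exactly what is needed. The trade-off is that the paper's quantitative tail bound $P(X_2\geqslant i)\leqslant 16c^2/((i-1)(i-2))$ is not a by-product of your argument, and this bound is invoked again later in the paper (in the proof of Theorem~\ref{yulekingmantheo31}) to establish uniform integrability of the sequence $\{F_c^d(\delta_\infty)\}_{d\geqslant 2}$; with your method that step would require a separate argument.
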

\begin{proof}
Let $\mu\in\mathcal{S}_{1}$, and let $X$ and $\tilde{X}$ be two independent random variables with distribution $\mu$. Let $Y$ be a random variable with distribution $\text{Exp}(c)$. Then $K_{X+\tilde{X}}(Y)$ has distribution $F_c(\mu)$. Therefore,
$$E[K_{X+\tilde{X}}(Y)]\leqslant E[X+\tilde{X}]=2E[X].$$
It follows that $F_c(\mu)\in\mathcal{S}_{1}$, which proves the first statement of the lemma. 

Since the expression in $(\ref{dist})$ increases as $W_1$ and $W_2$ increase, we have that $F_c$ is monotone. Since each distribution in $\mathcal{S}$ is stochastically dominated by $\delta_{\infty}$ and $F_c$ is monotone, if $F_c^{2}(\delta_{\infty})\in\mathcal{S}_{1}$ can be proved, the second statement of the lemma follows. 

Since each pair of individuals merges at rate one, the death rate in Kingman's coalescent is ${i \choose 2}$ when there are $i$ individuals. For $i\leqslant j$ and $i,j\in \mathbb{N}^+\cup\{\infty\}$, the event $\{K_j(Y)=i\}$ is the event that $Y$ is greater than the time it needs to get from $j$ individuals to $i$ individuals, but less than the time it needs to get from $j$ individuals to $i-1$ individuals. Let $Y_l\sim \text{Exp}\big({l\choose 2}\big)$ for all $l\geqslant2$, and let $Y_1=\infty$. Assume the random variables $Y_l$ are independent of $Y$. Then
\begin{align}\label{kji}
	P(K_j(Y)=i)=P\left(\sum_{l=i+1}^{j}Y_l\leqslant Y<\sum_{l=i}^{j}Y_l\right).
\end{align}
Because of the memoryless property of the exponential distribution, we have
\begin{align}\label{jumpp}
	P\left(\sum_{l=i+1}^{j}Y_l\leqslant Y<\sum_{l=i}^{j}Y_l\right)&=P(Y_i>Y)\prod_{l=i+1}^{j}P(Y_l\leqslant Y)\nonumber\\
	&=\begin{cases}
\frac{c}{\binom{i}{2}+c}\prod^{j}_{l=i+1}\frac{\binom{l}{2}}{\binom{l}{2}+c}, &\text{ when } i\geqslant 2, \\
\prod^{j}_{l=2}\frac{\binom{l}{2}}{\binom{l}{2}+c}, &\text{ when } i=1.\\
\end{cases}
\end{align}
Let $X_{1}$ and $\tilde{X}_{1}$ be two independent random variables with distribution $F_c(\delta_{\infty})$, and let $X_{2}$ be a random variable with distribution $F^{2}_c(\delta_{\infty})$. Then by $(\ref{kji})$ and $(\ref{jumpp})$ we have 
\begin{align*}
P(X_{1}=i)=\frac{c}{\binom{i}{2}+c}\prod^{\infty}_{l=i+1}\frac{\binom{l}{2}}{\binom{l}{2}+c}\leqslant\frac{2c}{i(i-1)+2c}\leqslant\frac{2c}{i(i-1)}, ~\forall i\geqslant2,
\end{align*}
and therefore
\begin{align}\label{X1}
P(X_{1}\geqslant i)\leqslant\sum^{\infty}_{l=i}\frac{2c}{l(l-1)}=\frac{2c}{i-1}, ~\forall i\geqslant2.
\end{align}
Hence
\begin{align}\label{2X1}
P(X_{1}+\tilde{X}_{1}\geqslant i)\leqslant 2\cdot P\left(X_{1}\geqslant\frac{i}{2}\right)\leqslant 2\cdot\frac{2c}{i/2-1}=\frac{8c}{i-2}, ~\forall i\geqslant4.
\end{align}
Since 
$$P(X_{2}\geqslant i)=\sum_{j=i}^{\infty}P(X_{1}+\tilde{X}_{1}=j)\cdot P(K_{j}(Y)\geqslant i),$$
and 
\begin{align*}
	P(K_{j}(Y)\geqslant i)\leqslant P(K_{\infty}(Y)\geqslant i)=P(X_{1}\geqslant i),~\forall j\in\mathbb{N}^+\cup\{\infty\},
\end{align*}
we have 
\begin{align}\label{X2}
	P(X_{2}\geqslant i)\leqslant P(X_1\geqslant i)\cdot\sum_{j=i}^{\infty}P(X_{1}+\tilde{X}_{1}=j)=P(X_1\geqslant i)\cdot P(X_{1}+\tilde{X}_{1}\geqslant i).
\end{align}
By $(\ref{X1})$, $(\ref{2X1})$ and $(\ref{X2})$, we have
\begin{equation}\label{Xinfinitynew1}
P(X_{2}\geqslant i)\leqslant P(X_1\geqslant i)\cdot P(X_{1}+\tilde{X}_{1}\geqslant i)\leqslant\frac{2c}{i-1}\cdot\frac{8c}{i-2}=\frac{16c^{2}}{(i-1)(i-2)}, ~\forall i\geqslant4,
\end{equation}
and therefore
\begin{equation*}
E[X_{2}]=\sum_{i=1}^{\infty}P(X_{2}\geqslant i)\leqslant 3+\sum_{i=4}^{\infty}\frac{16c^{2}}{(i-1)(i-2)}<\infty.
\end{equation*}
It follows that $F_c^{2}(\delta_{\infty})\in\mathcal{S}_{1}$, which completes the proof.
\end{proof}

Since $F_c$ is monotone, the sequence of iterates $F_c^n(\delta_1)$ is increasing,  
and therefore the limit 
\begin{equation}\label{defofmu1star}
\mu_1\coloneqq \lim_{n\to\infty} F_c^n(\delta_1)
\end{equation} 
exists in the sense of weak convergence.

Note that for all $k\in\mathbb{N}^+$,
\begin{align}\label{lemma8proof111}
	\left(F_c(\mu_1)\right)(\{k\})&=\sum_{i=k}^\infty \left(\mu_1 \ast \mu_1\right)(\{i\})P(K_i(Y)=k)\nonumber\\
	&= \lim_{n\to\infty}\sum_{i=k}^\infty\left(\left(\mu_1 \ast \mu_1\right)(\{i\})-\left(F_c^{n-1}(\delta_1) \ast F_c^{n-1}(\delta_1)\right)(\{i\})\right)P(K_i(Y)=k)\nonumber\\
	&\quad\quad\quad\quad+\lim_{n\to\infty}\sum_{i=k}^\infty\left(F_c^{n-1}(\delta_1) \ast F_c^{n-1}(\delta_1)\right)(\{i\})P(K_i(Y)=k). 
\end{align}
By Scheffe's theorem and the Dominated Convergence Theorem, the first term on the right-hand side of $(\ref{lemma8proof111})$ is 0. Note that the second term on the right-hand side of $(\ref{lemma8proof111})$ is 
$\mu_1(\{k\})$, which implies that $\mu_1=F_c(\mu_1)$. Then $\mu_1=F_c^2(\mu_1)\preceq F_c^2(\delta_\infty)\in\mathcal{S}_1$. Therefore $\mu_1$ is a distribution on $\mathcal{S}_1$ that satisfies $(\ref{First})$. Note that the same result can be reached by applying Lemma 4 in \cite{aldous2005}.

\subsection{Uniqueness of the Solution to the RDE}
\begin{Lemma}\label{ProbGenFunEqu1}
If $\mu^*_c\in\mathcal{S}_1$ is a distribution such that independent random variables $W,W_1,W_2$, all with distribution $\mu_c^*$, satisfy the recursive distributional equation  
\begin{equation*}\label{FirstinLemma}
W=_{d}K_{W_{1}+W_{2}}(Y),
\end{equation*}
where $Y\sim \text{Exp}(c)$ is independent of $W_1$ and $W_2$, then the probability generating function of $\mu^*_c$, denoted by $R_c$, satisfies
\begin{equation*}\label{Rx}
R_c(x)=R_c^{2}(x)+\frac{x(1-x)}{2c}R_c''(x),~R_c(0)=0,~R_c(1)=1,\text{ and } R_c'(x)>0,~\forall x\in[0,1].
\end{equation*}
\end{Lemma}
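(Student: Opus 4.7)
My plan is to realize $R_c$ as a Laplace transform in time of the probability generating function of Kingman's coalescent, and then exploit the forward Kolmogorov PDE satisfied by the latter to convert the Laplace-transform identity into the claimed ODE. Let $u_n(x,t) := E[x^{K_n(t)}]$ denote the pgf of $K_n(t)$. Since $K_n$ is the pure death chain with jump rate $\binom{k}{2}$ from $k$ to $k-1$, Kolmogorov's forward equation reads $p_{n,k}'(t) = -\binom{k}{2}p_{n,k}(t) + \binom{k+1}{2}p_{n,k+1}(t)$ for $p_{n,k}(t):=P(K_n(t)=k)$ (with the convention $p_{n,n+1}\equiv 0$). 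Multiplying by $x^k$, summing over $k$, and using $\sum_k k(k-1)p_{n,k}(t)x^{k-2} = \partial_{xx} u_n(x,t)$, one obtains
\begin{equation*}
\partial_t u_n(x,t) = \frac{x(1-x)}{2}\,\partial_{xx} u_n(x,t), \qquad u_n(x,0)=x^n.
\end{equation*}

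Now set $N:=W_1+W_2$ and $G(x,y):=E[u_N(x,y)]$. By the RDE and independence of $Y$ from $(W_1,W_2)$,
\begin{equation*}
R_c(x) = E[u_N(x,Y)] = \int_0^\infty c\, e^{-cy}\, G(x,y)\,dy,
\end{equation*}
with $G(x,0)=E[x^N]=R_c(x)^2$; by linearity $G$ also satisfies the PDE above in $y$. Integrating by parts in $y$ and substituting the PDE for $\partial_y G$,
\begin{equation*}
R_c(x) = G(x,0) + \int_0^\infty e^{-cy}\,\partial_y G(x,y)\,dy = R_c(x)^2 + \frac{x(1-x)}{2}\int_0^\infty e^{-cy}\,\partial_{xx} G(x,y)\,dy.
\end{equation*}
Exchanging $\partial_{xx}$ with the $y$-integral, which is justified on each subinterval $[0,x_0]\subset[0,1)$ by dominated convergence (since $k(k-1)x_0^{k-2}$ is uniformly bounded in $k$), and using $\int_0^\infty e^{-cy}G(x,y)\,dy = R_c(x)/c$, the remaining integral equals $R_c''(x)/c$. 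This yields the claimed ODE on $[0,1)$, and both sides extend trivially to $x\in\{0,1\}$ because $x(1-x)$ vanishes there.

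The boundary conditions are immediate: $R_c(0) = P(W=0)=0$ since $\mu_c^*$ is supported on $\mathbb{N}^+$, and $R_c(1)=1$. The series $R_c'(x) = \sum_{k\geqslant 1} k\,\mu_c^*(\{k\})\, x^{k-1}$ has nonnegative coefficients, and its constant term $R_c'(0) = \mu_c^*(\{1\}) = P(K_N(Y)=1)$ is strictly positive because $P(K_n(y)=1)>0$ for every $n\geqslant 1$ and $y>0$; hence $R_c'(x)>0$ on $[0,1]$. The main obstacle is purely bookkeeping—justifying the integration by parts in $y$ and the subsequent exchange of $\partial_{xx}$ with the $y$-integral; both follow from dominated convergence on compact subsets of $[0,1)$, using that $\mu_c^*\in\mathcal{S}_1$ to ensure the requisite integrability.
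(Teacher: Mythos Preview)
Your argument is correct and takes a genuinely different route from the paper's. The paper never writes down the PDE for $u_n(x,t)$; instead it works directly at the level of point masses. Using the memoryless property (competing exponential clocks with rates $\binom{i}{2}$ and $c$), it derives the one-step recursion
\[
\bigl(i(i-1)+2c\bigr)\,P(W=i) \;=\; (i+1)i\,P(W=i+1) \;+\; 2c\,P(W_1+W_2=i),\qquad i\geqslant 1,
\]
then multiplies by $x^i$ and sums to obtain $x^2 R_c'' + 2c R_c = x R_c'' + 2c R_c^2$. Your approach replaces this discrete competing-exponentials computation by the forward-equation PDE $\partial_t u_n = \tfrac{x(1-x)}{2}\,\partial_{xx} u_n$ together with the observation that integrating against $c\,e^{-cy}\,dy$ turns $\partial_y$ into multiplication by $c$ after an integration by parts. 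The paper's route is more elementary and yields, as a byproduct, the explicit pointwise recursion on $\mu_c^*(\{i\})$ that is reused later in the uniqueness argument and in the $c=1$ example; your route is structurally cleaner and would generalize immediately to other pure-death generators or killing rates. One minor quibble: your remark about the ODE ``extending trivially to $x\in\{0,1\}$'' is a bit glib at $x=1$, since $R_c''(1)=E[W(W-1)]$ need not be finite from the hypotheses alone; but the ODE is only ever used on $[0,1)$, and the paper treats it that way as well.
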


\begin{proof}
In Kingman's coalescent, once the number of lineages reaches $i\in\mathbb{N}^+$, the rate at which the number of lineages jumps to $i-1$ is $i(i-1)/2$. Because of the memoryless property
, the probability for the number of lineages to stay at $i$ until time $Y$ is $2c/(i(i-1)+2c)$, and the probability for the number of lineages to reach $i-1$ before time $Y$ is $i(i-1)/(i(i-1)+2c)$. Starting from $W_1+W_2$ lineages, let $B_i$ be the event that the number of lineages reaches $i$ before time $Y$. Then 
\begin{equation*}
P(	K_{W_1+W_2}(Y)=i)=P(B_{i})\frac{2c}{i(i-1)+2c},~\forall i\in\mathbb{N}^+.
\end{equation*}
Note that the number of individuals can reach $i$ before time $Y$ if the number of individuals first reaches $i+1$ and then decreases to $i$ or the number of individuals at the beginning of the coalescent is $i$. It follows that
\begin{equation*}
P(	B_i)=P(B_{i+1})\frac{(i+1)i}{(i+1)i+2c}+P(W_1+W_2=i),~\forall i\in\mathbb{N}^+.
\end{equation*}
Therefore,
\begin{align}\label{probKingman11}
	P(&K_{W_1+W_2}(Y)=i)\nonumber\\
	&=\left(P(B_{i+1})\frac{(i+1)i}{(i+1)i+2c}+P(W_1+W_2=i)\right)\frac{2c}{i(i-1)+2c}\nonumber\\
	&=\left(P(	K_{W_1+W_2}(Y)=i+1)\frac{(i+1)i+2c}{2c}\frac{(i+1)i}{(i+1)i+2c}+P(W_1+W_2=i)\right)\frac{2c}{i(i-1)+2c}\nonumber\\
	&=\frac{(i+1)i}{i(i-1)+2c}P(K_{W_1+W_2}(Y)=i+1)+\frac{2c}{i(i-1)+2c}P(W_1+W_2=i),~\forall i\in\mathbb{N}^+.
\end{align}
Since $\mu^*_c$ is a distribution on $\mathbb{N}^+$, we have $W_1+W_2\geqslant 2$. Therefore, 
\begin{align*}\label{probKingman12}
	P(K_{W_1+W_2}(Y)=1)=\frac{1}{c}P(	K_{W_1+W_2}(Y)=2).
\end{align*}
Since $W=_d K_{W_{1}+W_{2}}(Y)$, it follows from $(\ref{probKingman11})$ that 
\begin{equation}\label{Kxxi12} 
	\left(i(i-1)+2c\right)P(W=i)=(i+1)iP(W=i+1)+2cP(W_1+W_2=i),~\forall i\in\mathbb{N}^+.
\end{equation}
By multiplying both sides of $(\ref{Kxxi12})$ by $x^i$ and summing over $i$, we have
\begin{align}\label{PXn+1}
	\sum_{i=1}^\infty i(i-1)P(W=i)&x^i+2c\sum_{i=1}^{\infty}P(W=i)x^i\nonumber\\
	&=\sum_{i=1}^\infty (i+1)iP(W=i+1)x^{i}+2c\sum_{i=2}^\infty P(W_1+W_2=i)x^i.
\end{align}
Note that $P(W=0)=0$. Let $R_c$ be the probability generating function of $\mu_c^*$. Then 
$$R_c(0)=0,$$
$$\sum_{i=1}^{\infty}P(W=i)x^i=R_c(x),$$
$$\sum_{i=1}^\infty i(i-1) P(W=i)x^i=x^2\sum_{i=1}^\infty i(i-1) P(W=i)x^{i-2}=x^2R_c''(x),$$
$$\sum_{i=1}^\infty(i+1)iP(W=i+1)x^{i}=x\sum_{i=1}^\infty i(i-1) P(W=i)x^{i-2}=xR_c''(x),$$ 
and 
$$\sum_{i=2}^\infty P(W_1+W_2=i)x^i=R_c^2(x).$$
Equation $(\ref{PXn+1})$ can then be transformed to an equality about the probability generating function of $\mu^*_c$: 
$$x^2R_c''(x)+2cR_c(x)=xR_c''(x)+2cR_c^2(x).$$
Note that $\mu_c^*=F_c(\mu_c^*)=F_c^2(\mu_c^*)$. By Lemma \ref{3mono}, we have $\mu_c^*\in\mathcal{S}_1$. It follows that $\mu_c^*(\{\infty\})=0$ and therefore
\begin{equation*}\label{R1}
	R_c(1)=1.
\end{equation*}
Since $\mu_c^*\in\mathcal{S}_1$, there exists some constant $m$ such that $P(W=m)>0$. Then
$$P(W=1)=P(K_{W_1+W_2}=1)\geqslant P(W=m)^2P(K_{2m}(Y)=1).$$
By $(\ref{kji})$ and $(\ref{jumpp})$, we have $P(K_{2m}(Y)=1)>0.$
Therefore,
\begin{equation*}\label{PW1}
P(W=1)\geqslant P(W=m)^2P(K_{2m}(Y)=1)>0.
\end{equation*}
It follows that 
\begin{equation*}\label{R'x0}
R_c'(x)\geqslant P(W=1)>0,~ \forall x\in[0,1].
\end{equation*}
The result of the lemma follows.
\end{proof}

Note that all distributions in $\mathcal{S}$ stochastically dominate $\delta_1$ and are stochastically dominated by $\delta_\infty$. Since $F_c$ is monotone,
 any fixed point of $F_c$ stochastically dominates $\lim_{n\to\infty}F_c^n(\delta_1)$ and is stochastically dominated by $\lim_{n\to\infty}F_c^n(\delta_\infty)$, which implies that if we can prove the limiting distributions $\lim_{n\to\infty}F_c^n(\delta_1)$ and $\lim_{n\to\infty}F_c^n(\delta_\infty)$ are the same, then the fixed point of $F_c$ on $\mathcal{S}$ is unique. That is, the distribution on $\mathbb{N}^+\cup\{\infty\}$ that can satisfy the property in Theorem \ref{existunique} is unique. Recall the definition of $\mu_1$ from (\ref{defofmu1star}). Let
\begin{equation}\label{defofmu2star}
\mu_2\coloneqq \lim_{n\to\infty}F_c^n(\delta_\infty).
\end{equation}
By a similar argument to the one in (\ref{lemma8proof111}), we have $\mu_2=F_c(\mu_2)$. Let $R_1(x)$ and $R_2(x)$ be the probability generating functions of $\mu_1$ and $\mu_2$ respectively.


\begin{Lemma}\label{newwgprime}
For $0\leqslant x\leqslant1$, let 
\begin{equation}\label{defineg}
g_i(x)\coloneqq -\log(1-R_i(x)),~i=1,2.
\end{equation}
Then for $i=1,2$ and for all $x\in(0,1)$, we have
\begin{equation}\label{g}
g_i''(x)=(g_i'(x))^2+2c\frac{1-e^{-g_i(x)}}{x}	\frac{1}{1-x}, ~ g_i(0)=0, ~\lim_{x\to1^-}g_i(x)=\infty,~ 0<g_i(x)<\infty.
\end{equation}
We also have
\begin{equation}\label{Posigprime}
\lim_{x\to1^-}g_i'(x)=\infty,
\end{equation}
\begin{equation}\label{Posigprimeprime}
\lim_{x\to1^-}g_i''(x)=\infty,
\end{equation}
and
\begin{equation}\label{1/1x}
g_i'(x)\leqslant \frac{1}{1-x}, ~\forall x\in[0,1).
\end{equation}
\end{Lemma}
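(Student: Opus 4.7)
The plan is to derive the ODE for $g_i$ by substituting $R_i = 1 - e^{-g_i}$ directly into the ODE for $R_i$ supplied by Lemma \ref{ProbGenFunEqu1}. That lemma applies to both $R_1$ and $R_2$ because $\mu_1$ and $\mu_2$ are fixed points of $F_c$ that both lie in $\mathcal{S}_1$ (for $\mu_2 = F_c^2(\mu_2)$ this uses Lemma \ref{3mono}). Computing $R_i' = g_i' e^{-g_i}$ and $R_i'' = e^{-g_i}\bigl(g_i'' - (g_i')^2\bigr)$, the left-hand side of the ODE factors as $R_i - R_i^2 = e^{-g_i}(1 - e^{-g_i})$, so one factor of $e^{-g_i}$ cancels and rearrangement yields exactly (\ref{g}). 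The boundary data are then immediate: $g_i(0) = -\log(1 - R_i(0)) = 0$, and $R_i(1) = 1$ forces $g_i(x) \to \infty$ as $x \to 1^-$. For the strict bounds $0 < g_i(x) < \infty$ on $(0,1)$, note that $\mu_i$ is a probability measure on $\mathbb{N}^+$ with no mass at $0$ or $\infty$, so that $0 < R_i(x) < 1$ throughout $(0,1)$.

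The two divergence statements then follow by a short convexity argument. The ODE shows $g_i''(x) > 0$ on $(0,1)$, since $(g_i'(x))^2$ and $2c(1 - e^{-g_i(x)})/(x(1-x))$ are both strictly positive there. Hence $g_i'$ is strictly increasing on $(0,1)$. If $g_i'$ stayed bounded, then $g_i$ would be bounded on the bounded interval $[0,1)$, contradicting $g_i(x) \to \infty$; therefore $\lim_{x \to 1^-} g_i'(x) = \infty$. Feeding this back into the ODE, together with $1 - e^{-g_i(x)} \to 1$, yields (\ref{Posigprimeprime}).

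The only nonroutine step is (\ref{1/1x}), the bound $g_i'(x) \leq 1/(1-x)$, and I would handle it by a direct power-series comparison. Using $g_i' = R_i'/(1-R_i)$, the inequality is equivalent to $(1-x) R_i'(x) \leq 1 - R_i(x)$. Writing $R_i(x) = \sum_{k \geq 1} p_k x^k$ with $\sum_{k \geq 1} p_k = 1$, I would expand
\begin{equation*}
1 - R_i(x) = (1-x)\sum_{k \geq 1} p_k\bigl(1 + x + \cdots + x^{k-1}\bigr), \qquad (1-x)R_i'(x) = (1-x)\sum_{k \geq 1} k p_k x^{k-1},
\end{equation*}
so after cancelling $(1-x)$ it suffices to check termwise that $k x^{k-1} \leq 1 + x + \cdots + x^{k-1}$ for $x \in [0,1]$, which holds because each of the $k$ summands on the right is at least $x^{k-1}$. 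None of these steps should present genuine difficulty; the algebraic substitution is essentially forced, and the remaining claims follow from convexity of $g_i$ and the elementary structure of a probability generating function.
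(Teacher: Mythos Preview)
Your proof is correct and follows essentially the same route as the paper. The only tactical differences are that the paper derives (\ref{Posigprimeprime}) before (\ref{Posigprime}) via the explicit lower bound $g_i''(x) \geqslant 2cR_i'(0)/(1-x)$, and for (\ref{1/1x}) it uses the equivalent one-line observation $R_i'(x)(1-x)\leqslant \int_x^1 R_i'(t)\,dt = 1-R_i(x)$ from monotonicity of $R_i'$ rather than your termwise power-series comparison.
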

\begin{proof}
It follows from (\ref{defineg}) and Lemma \ref{ProbGenFunEqu1} that for $i=1,2$,
$$g_i(0)=0, ~\lim_{x\to1^-}g_i(x)=\infty,~ 0<g_i(x)<\infty,~\forall x\in(0,1),$$
\begin{equation}\label{Rxg'xg''x}
	g_i'(x)=\frac{R_i'(x)}{1-R_i(x)}>0,~ \forall x\in[0,1), 
\end{equation}
and
\begin{equation}\label{g''proof1}
g_i''(x)=\frac{(R_i'(x))^2}{(1-R_i(x))^2}+\frac{R_i''(x)}{1-R_i(x)}=\frac{(R_i'(x))^2}{(1-R_i(x))^2}+\frac{2cR_i(x)}{x(1-x)}>0,~ \forall x\in[0,1).
\end{equation}
The result in $(\ref{g})$ follows.

By $(\ref{defineg}$) and the definition of the probability generating function, we have
\begin{equation*}
	\frac{1-e^{-g_i(x)}}{x}=\frac{R_i(x)}{x}=\sum_{n=1}^{\infty}\mu_i(\{n\})x^{n-1}\geqslant \mu_i(\{1\})=R_i'(0),~\forall x\in[0,1], ~i=1,2.
\end{equation*}
Recall from Lemma \ref{ProbGenFunEqu1} that $R_i'(x)>0$ for all $x\in[0,1]$, which implies that $(1-e^{-g_i(x)})/x$ is positive and nondecreasing on $[0,1]$. 
Then by $(\ref{g}$), there exists some positive constant $a$ such that 
\begin{equation*}
g_i''(x)>\frac{a}{1-x},~\forall x\in[0,1),~i=1,2 ,
\end{equation*}
which implies $(\ref{Posigprimeprime})$. Equation $(\ref{Posigprime})$ then follows from the Fundamental Theorem of Calculus.
Note that $R_i'(x)=\sum_{n=1}^{\infty}n\mu_i(\{n\})x^{n-1}$ is nondecreasing on $[0,1]$. Then 
\begin{equation*}\label{1R(x)}
R_i'(x)(1-x)\leqslant 1-R_i(x), ~\forall x\in[0,1),~i=1,2.	
\end{equation*}
Therefore,
\begin{equation*}
g_i'(x)=\frac{R_i'(x)}{1-R_i(x)}\leqslant \frac{1}{1-x}, ~\forall x\in[0,1),~i=1,2,
\end{equation*}
which is the result in $(\ref{1/1x})$.
\end{proof}

Since $\delta_1\preceq\delta_\infty$ and $F_c$ is monotone, we have
\begin{equation}\label{stochasmu12}
\mu_1=\lim_{n\to\infty}F_c^n(\delta_1)\preceq\lim_{n\to\infty}F_c^n(\delta_\infty)=\mu_2.
\end{equation}
If $\mu_1\neq \mu_2$, then $R_1(x)>R_2(x)$ and $g_1(x)>g_2(x)$ for all $x\in(0,1)$. Then by ($\ref{g}$), we have 
\begin{equation}\label{g''}
g_1''(x)-(g_1'(x))^2=2c\frac{1-e^{-g_1(x)}}{x}	\frac{1}{1-x}>2c\frac{1-e^{-g_2(x)}}{x}	\frac{1}{1-x}=g_2''(x)-(g_2'(x))^2,~ \forall x\in(0,1).
\end{equation}
Recall from $(\ref{Kxxi12})$ that 
\begin{align*}
\mu_i(\{n\})=\frac{(n+1)n}{n(n-1)+2c}\mu_i(\{n+1\})+\frac{2c}{n(n-1)+2c}\left(\mu_i\ast\mu_i\right)(\{n\}).~\forall n\in\mathbb{N}^+,~i=1,2.
\end{align*}
It follows by an induction argument that if $\mu_1(\{1\})=\mu_2(\{1\})$, then $\mu_1$ and $\mu_2$ must be the same distribution. 
By ($\ref{stochasmu12}$) and our assumption that $\mu_1$ and $\mu_2$ are two different distributions, we have 
$$R_1'(0)=\mu_1(\{1\})>\mu_2(\{1\})=R_2'(0).$$
It follows that
\begin{equation}\label{g'(0)}
g_1'(0)=\frac{R_1'(0)}{1-R_1(0)}>\frac{R_2'(0)}{1-R_2(0)}= g_2'(0).	
\end{equation}
Recall from $(\ref{Rxg'xg''x})$ and ($\ref{g''proof1})$ that $g_i'(x)>0$ and $g_i''(x)>0$ for all $x\in[0,1)$, which implies 
$$g_1'(x)+g_2'(x)>g_1'(0)+g_2'(0)>0,~\forall x\in(0,1).$$ 
By ($\ref{g''}$), ($\ref{g'(0)}$) and the Fundamental Theorem of Calculus, we have 
\begin{align*}\label{g1_g2}
g_1'(x)-g_2'(x)&=	g_1'(0)-g_2'(0)+\int_0^x(g_1''(y)-g_2''(y))dy\\
&>g_1'(0)-g_2'(0)+\int_0^x(g_1'(y))^2-(g_2'(y))^2dy\\
&>\int_0^x(g_1'(y)+g_2'(y))(g_1'(y)-g_2'(y))dy\\
&>(g_1'(0)+g_2'(0))\int_0^x(g_1'(y)-g_2'(y))dy.
\end{align*}
Since $g_1'(0)-g_2'(0)>0$ and $g_1'(x)-g_2'(x)$ is continuous, we have $g_1'(x)-g_2'(x)>0$ for all $x\in[0,1)$. Combining this result with (\ref{Rxg'xg''x}), we have the result that if $\mu_1$ and $\mu_2$ are not identical distributions, then
\begin{equation}\label{g'}
	g_1'(x)>g_2'(x)>0, ~\forall x\in[0,1).
\end{equation}
And it follows from (\ref{g''proof1}), ($\ref{g''}$) and ($\ref{g'}$) that
\begin{equation}\label{g''2}
	g_1''(x)>g_2''(x)>0, ~\forall x\in[0,1).
\end{equation}

\begin{Lemma}\label{lemma7hn}
There exists some constant $\bar{\epsilon}>0$ such that for each $n\geqslant2$, there exists a unique function $h_{1,n}$ defined on $[1-\bar{\epsilon},1-1/n]$ that satisfies the ordinary differential equation 
	\begin{equation}\label{hODE}
	h'_{1,n}(x)=(h_{1,n}(x))^2+\frac{2c}{1-x},~h_{1,n}\left(1-\frac{1}{n}\right)=g_1'\left(1-\frac{1}{n}\right).
	\end{equation}
Also, the limit $\lim_{n\to\infty}h_{1,n}(x)$ exists for all $x\in[1-\bar{\epsilon},1)$. Let 
	$$h_1(x)\coloneqq \lim_{n\to\infty}h_{1,n}(x),~ x\in[1-\bar{\epsilon},1).$$
	Then $h_1$ is a solution to the ordinary differential equation 
\begin{equation}\label{hInitial}
h_1'(x)=(h_1(x))^2+\frac{2c}{1-x},
\end{equation}
with the condition that 
\begin{equation}\label{conditionlemma2}
	\int_{1-\bar{\epsilon}}^x h_1(y)dy<\infty, ~\forall x\in[1-\bar{\epsilon},1),\text{ and } \lim_{x\to 1^-}\int_{1-\bar{\epsilon}}^x h_1(y)dy=\infty.
\end{equation}
\end{Lemma}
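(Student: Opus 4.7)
The plan is to construct each $h_{1,n}$ by Picard--Lindel\"of, sandwich the family between an explicit supersolution and the subsolution $g_1'$ uniformly in $n$, and extract $h_1$ as a monotone limit. Since the right-hand side $h^2+2c/(1-x)$ is locally Lipschitz in $h$ and continuous in $x$ on any compact subinterval of $[0,1)$, standard ODE theory gives, for each $n\geq 2$, a unique maximal backward solution $h_{1,n}$ on an interval $(x_n^*,1-1/n]$, where $x_n^*$ is either $-\infty$ or a blow-up point with $h_{1,n}(x)\to-\infty$ as $x\to(x_n^*)^+$. The upper bound comes from $g_1'$: by (\ref{g''proof1}) together with $R_1(x)/x=\sum_{k\geq 1}\mu_1(\{k\})x^{k-1}\leq 1$, we have $g_1''\leq (g_1')^2+2c/(1-x)$, so $g_1'$ is a subsolution of (\ref{hODE}). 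Since $h_{1,n}$ and $g_1'$ agree at $1-1/n$, the integrating-factor argument applied to $d_n:=g_1'-h_{1,n}$ (which satisfies $d_n'\leq (g_1'+h_{1,n})d_n$ and $d_n(1-1/n)=0$) gives $h_{1,n}(x)\leq g_1'(x)$ on $(x_n^*,1-1/n]$. The same comparison at $1-1/(n+1)$, combined with the fact that two solutions of the same ODE cannot cross, yields monotonicity: $h_{1,n+1}\leq h_{1,n}$ on the common interval of existence.

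The lower bound uses the explicit supersolution $\psi(x):=1/(2(1-x))$: a direct calculation gives $\psi'-\psi^2=1/(4(1-x)^2)$, which is $\geq 2c/(1-x)$ if and only if $1-x\leq 1/(8c)$, so $\psi$ is a supersolution of (\ref{hODE}) on $[1-\bar{\epsilon},1)$ with $\bar{\epsilon}:=1/(8c)$. To apply the comparison I need $\psi(1-1/n)=n/2\leq g_1'(1-1/n)$ for all sufficiently large $n$. Because $\mu_1\in\mathcal{S}_1$ by Lemma \ref{3mono}, $R_1'(1)$ is finite; by monotonicity of the nondecreasing function $R_1'$ on $[0,1]$,
\[
1-R_1(1-1/n)=\int_{1-1/n}^1 R_1'(t)\,dt\leq R_1'(1)/n,
\]
so $g_1'(1-1/n)=R_1'(1-1/n)/(1-R_1(1-1/n))\geq nR_1'(1-1/n)/R_1'(1)\to n$, giving $g_1'(1-1/n)\geq n/2$ for all $n\geq N_0$, for some $N_0$ depending only on $\mu_1$. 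Supersolution comparison (again via the integrating factor applied to $h_{1,n}-\psi$) then yields $h_{1,n}(x)\geq \psi(x)>0$ on $(\max(x_n^*,1-\bar{\epsilon}),1-1/n]$ for $n\geq N_0$; since $\psi$ is bounded on $[1-\bar{\epsilon},1-1/n]$, backward blow-up on this interval is precluded and $x_n^*<1-\bar{\epsilon}$. For the remaining finitely many $n<N_0$, a blow-up $x_n^*\in[1-\bar{\epsilon},1-1/n)$ would contradict the inequality $h_{1,n}\geq h_{1,N_0}$ on their common domain (which follows from non-crossing together with $h_{1,N_0}(1-1/n)\leq g_1'(1-1/n)=h_{1,n}(1-1/n)$), because $h_{1,N_0}$ is finite on $[1-\bar{\epsilon},1-1/N_0]\supset[1-\bar{\epsilon},1-1/n]$. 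Thus $h_{1,n}$ is defined on $[1-\bar{\epsilon},1-1/n]$ for every $n\geq 2$.

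The sequence $(h_{1,n})$ is monotone decreasing in $n$ and, for each fixed $x\in[1-\bar{\epsilon},1)$, eventually bounded below by $\psi(x)>0$, so $h_1(x):=\lim_{n\to\infty}h_{1,n}(x)$ exists and satisfies $1/(2(1-x))\leq h_1(x)\leq g_1'(x)$. Writing (\ref{hODE}) in integral form $h_{1,n}(x)=h_{1,n}(y)+\int_y^x((h_{1,n}(t))^2+2c/(1-t))\,dt$ for $1-\bar{\epsilon}\leq y\leq x<1-1/n$ and passing to the limit using dominated convergence (with dominator $(g_1'(t))^2+2c/(1-t)$, which is integrable on compact subsets of $[1-\bar{\epsilon},1)$) shows that $h_1\in C^1([1-\bar{\epsilon},1))$ solves (\ref{hInitial}). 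The integral condition (\ref{conditionlemma2}) follows from the sandwich: $\int_{1-\bar{\epsilon}}^x h_1(y)\,dy\leq g_1(x)-g_1(1-\bar{\epsilon})<\infty$ for each $x\in[1-\bar{\epsilon},1)$, while $\int_{1-\bar{\epsilon}}^x h_1(y)\,dy\geq \frac{1}{2}(\log\bar{\epsilon}-\log(1-x))\to\infty$ as $x\to 1^-$. The main obstacle is making the supersolution comparison uniform in $n$; this is why the asymptotic $g_1'(1-1/n)\gtrsim n$, derived from the finiteness of the mean $R_1'(1)$, is essential.
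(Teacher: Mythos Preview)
Your argument is correct and takes a genuinely different route from the paper. Both proofs share the upper bound $h_{1,n}\leq g_1'$ (obtained by recognizing $g_1'$ as a subsolution, since $R_1(x)/x\leq 1$) and the non-crossing argument for monotonicity in $n$. The divergence is in the lower bound. The paper never produces a closed-form supersolution; instead it derives the differential inequality $(g_1'-h_{1,n})'>2c(R_1(x)-x)/(x-x^2)>-b$, integrates to get $g_1'(x)-h_{1,n}(x)<b(1-1/n-x)$, and then uses positivity and convexity of $g_1'$ to squeeze out $h_{1,n}>0$ on a uniform interval. The same uniform bound $g_1'-h_{1,n}<b\bar\epsilon$ is recycled at the end to show $\int h_1=\infty$ by comparison with $\int g_1'=\infty$. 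For the limiting ODE the paper argues via monotone one-sided difference quotients, exploiting that $h_{1,n}'$ is increasing.

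Your choice of the explicit supersolution $\psi(x)=1/(2(1-x))$ on $[1-1/(8c),1)$ collapses all three of these steps into one sandwich: it blocks backward blow-up, gives the pointwise lower bound $h_1\geq\psi$ so that $\int h_1$ diverges for free, and supplies the integrable dominator $(g_1')^2+2c/(1-x)$ that lets you pass to the limit in the integral form by dominated convergence rather than by difference quotients. The auxiliary input you need---$g_1'(1-1/n)\geq n/2$ for large $n$, obtained from $1-R_1(1-1/n)\leq R_1'(1)/n$ and $\mu_1\in\mathcal{S}_1$---is a clean replacement for the paper's use of the boundedness of $(R_1(x)-x)/(x-x^2)$ near $x=1$. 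One cosmetic point: take $\bar\epsilon=\min\{1/(8c),1/2\}$ so that $[1-\bar\epsilon,1)\subset(0,1)$ and the statement ``for each $n\geq 2$'' is not vacuous only asymptotically. The paper's approach yields the extra quantitative estimate $g_1'-h_1\leq b\bar\epsilon$, but this is not used downstream; your argument is shorter and just as strong for the purposes of Lemma~\ref{lemma9delta1} and the proof of Theorem~\ref{existunique}.
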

\begin{proof}
Firstly we prove the existence and uniqueness of $h_{1,n}$ on the interval stated in the lemma. Let 
	\begin{equation}\label{fderivative}
		f(x,y)\coloneqq y^2+\frac{2c}{1-x}.
	\end{equation}
Then $f(x,y)$ is defined in $[0,1)\times\mathbb{R}\subset\mathbb{R}^2$. In addition, the functions $f$ and $\partial f/\partial y$	are defined and continuous in $[0,1)\times\mathbb{R}$. By Theorem 1.3.1 in \cite{sanchez1979ordinary}, there exists a unique solution $y=h_{1,n}(x)$ of (\ref{hODE}) defined in some neighborhood of $(1-1/n,g'_1\left(1-1/n\right))\in\mathbb{R}^2$. Suppose that $h_{1,n}(x)$ is defined on $(r_{1,n},r_{2,n})$, where $0\leqslant r_{1,n}<1-1/n$ and $1-1/n<r_{2,n}\leqslant 1$.

Let $j_n$ be a function of $x$ that satisfies 
    \begin{equation}\label{jx} 
    j_n'(x)=(j_n(x))^2+4cn, ~j_{n}\left(1-\frac{1}{n}\right)=g'_1\left(1-\frac{1}{n}\right).
    \end{equation}
By Theorem 1.3.1 in \cite{sanchez1979ordinary}, we have that $y=j_n(x)$ is uniquely defined in some neighborhood of $(1-1/n,g'_1\left(1-1/n\right))\in\mathbb{R}^2$. Solving the differential equation, we get
    \begin{equation*}\label{jxsoluton}
    	j_n(x)=\sqrt{4cn}\tan\left(\sqrt{4cn}\left(x+\frac{1}{\sqrt{4cn}}\arctan\left(\frac{g_1'(1-1/n)}{\sqrt{4cn}}\right)-1+\frac{1}{n}\right)\right).
    \end{equation*}
    Therefore, there exists $\epsilon_n>0$ such that 
    $$j_n(x)<\infty,~\forall x\in\left[1-\frac{1}{n},1-\frac{1}{n}+\epsilon_n\right].$$ 
    Note that 
    $$0<\frac{2c}{1-x}\leqslant 4cn,~\forall x\in\left[0,1-\frac{1}{n}+\frac{1}{2n}\right],$$ and 
    $$h_{1,n}\left(1-\frac{1}{n}\right)=j_n\left(1-\frac{1}{n}\right)=g'_1\left(1-\frac{1}{n}\right) >0.$$ 
If $r_{2,n}\leqslant 1-1/n+\left(1/(2n)\wedge\epsilon_n\right)$, then by ($\ref{hODE}$), ($\ref{jx}$) and the comparison theorem, 
    we have
\begin{equation}\label{boundabove}
		h_{1,n}(x)\leqslant j_n(x)<j_n\left(1-\frac{1}{n}+\left(\frac{1}{2n}\wedge\epsilon_n\right)\right)<\infty, \forall x\in \left(1-\frac{1}{n},r_{2,n}\right).
	\end{equation}
Since $f$ is bounded on $(0,1-1/n+\left(1/(2n)\wedge\epsilon_n\right)]\times [0,j_n(1-1/n+\left(1/(2n)\wedge\epsilon_n\right))]$, by Theorem~6.2.1 in \cite{sanchez1979ordinary}, the solution $h_{1,n}(x)$ can be continued to the right of $r_{2,n}$. Therefore, function $h_{1,n}$ exists on $(r_{1,n}, 1-1/n+\left(1/(2n)\wedge\epsilon_n\right)]$.

Recall from (\ref{g'}) that $g_1'(1-1/n)>0$, which implies $h_{1,n}(1-1/n)>0$. If there does not exist $x_n\in(r_{1,n},1-1/n)$ such that 
	\begin{equation}\label{xnsetting}
		h_{1,n}(x)>0, ~\forall x\in\left(x_n, 1-\frac{1}{n}\right] \text{ and } h_{1,n}(x_n)=0,
	\end{equation}
then $h_{1,n}(x)\in(0,g_1'(1-1/n)]$ for all $x\in(r_{1,n},1-1/n]$.
Since the function $f$ is bounded on $(0,1-1/n+\left(1/(2n)\wedge\epsilon_n\right)]\times [0,j_n(1-1/n+\left(1/(2n)\wedge\epsilon_n\right))]$,
by Theorem 6.2.1 in \cite{sanchez1979ordinary}, the solution $h_{1,n}(x)$ can be continued to the left of $r_{1,n}$. Therefore, either there is no such $x_n$ and $h_{1,n}$ exists on $[0,1-1/n+\left(1/(2n)\wedge\epsilon_n\right)]$, or $x_n$ exists on $(0,1-1/n)$ and $h_{1,n}$ exists on $[x_n,1-1/n+\left(1/(2n)\wedge\epsilon_n\right)]$. For the first case, we can set $x_n=0$.

	Recall that $(1-e^{-g_1(x)})/x=\sum_{n=1}^{\infty}\mu_i(\{n\})x^{n-1}$ is a positive and nondecreasing function on $[0,1]$. Then
	\begin{equation}\label{1egxupper}
	\frac{	1-e^{-g_1(x)}}{x}< 1,~\forall x\in(0,1).
	\end{equation}
    If $h_{1,n}(x)$ and $g_1'(x)$ intersect at a point $x^*\in(x_n, 1-1/n)$, then 
	$$h'_{1,n}(x^*)=(h_{1,n}(x^*))^2+\frac{2c}{1-x^*}>(g_1'(x^*))^2+2c\frac{1-e^{-g_1(x^*)}}{x^*}	\frac{1}{1-x^*}=g_1''(x^*).$$ 
	Therefore, by $(\ref{g})$, ($\ref{hODE}$), $(\ref{1egxupper})$ and the comparison theorem, we have $h_{1,n}(x)>g'_1(x)$ for all $x\in(x^*,1-1/n+\left(1/(2n)\wedge\epsilon_n\right)]$. This implies that $h_{1,n}(1-1/n)>g_1'(1-1/n)$, which contradicts ($\ref{hODE}$). Hence we have 
	\begin{equation}\label{difn}
	h_{1,n}(x)<g'_1(x),~ \forall x\in\left(x_n,1-\frac{1}{n}\right).
	\end{equation}
	
	Now we are going to show that there exists $\bar{\epsilon}>0$ such that for all $n\geqslant 2$, we have $x_n<1-1/n-\bar{\epsilon}$.	
	By $(\ref{defineg})$, $(\ref{g})$, ($\ref{hODE}$), $(\ref{xnsetting})$ and $(\ref{difn})$, we have
	\begin{align}
	(g'_1(x)-h_{1,n}(x))'&=(g'_1(x))^2-(h_{1,n}(x))^2+\frac{2c}{1-x}\left(\frac{1-e^{-g_1(x)}}{x}-1\right)\nonumber\\
	&>\frac{2c}{1-x}\left(\frac{1-e^{-g_1(x)}}{x}-1\right)\nonumber\\
	&=2c\frac{R_1(x)-x}{x-x^2},~ \forall x\in\left[x_n, 1-\frac{1}{n}\right)\label{diffg'h}.
	\end{align}
	By L'Hôpital's rule, we have 
	\begin{equation*}\label{Lhospital}
\lim_{x\to1}\frac{R_1(x)-x}{x-x^2}=1-R'_1(1),
\end{equation*}
where $R_1'(1)$ equals the expected value of a random variable with distribution $\mu_1$. By Lemma \ref{3mono}, we have $\mu_1\in\mathcal{S}_1$, which implies $R_1'(1)<\infty$. Note that $(R_1(x)-x)/(x-x^2)>-1/(1-x)$, and $(R_1(x)-x)/(x-x^2)$ is continuous on $(0,1)$. Therefore, there exists some constant $b>0$ such that 
\begin{equation}\label{boundbelow}
	2c\frac{R_1(x)-x}{x-x^2}>-b,~\forall x\in(0,1).
\end{equation}
By $(\ref{hODE})$, $(\ref{diffg'h})$ and $(\ref{boundbelow})$, we have
\begin{equation}\label{g111/n}
g'_1\left(1-\frac{1}{n}-\epsilon\right)-h_{1,n}\left(1-\frac{1}{n}-\epsilon\right)<b\epsilon,~\forall \epsilon\in\left(0,1-\frac{1}{n}-x_n\right].
\end{equation}
Since $g_1''(x)>0$ and $g_1'(x)>0$  on $[0,1)$, by (\ref{g'}) and (\ref{g''2}), we can find an $\bar{\epsilon}\in(0,1/2)$ such that $g'_1(1-1/2-\epsilon)>b\epsilon$ for all $\epsilon\in(0,\bar{\epsilon}]$, and therefore for all $n\geqslant 2$ and $\epsilon\in(0,\bar{\epsilon}]$, we have $g'_1(1-1/n-\epsilon)\geqslant g'_1(1-1/2-\epsilon)>b\epsilon$.   
If $x_n\geqslant 1-1/n-\bar{\epsilon}$, then 
$$g_1'(x_n)\geqslant g_1'\left(1-\frac{1}{n}-\bar{\epsilon}\right)>b\bar{\epsilon}\geqslant b\left(1-\frac{1}{n}-x_n\right).$$ 
Therefore, by $(\ref{g111/n})$, we have 
$$h_{1,n}(x_n)>g_1'(x_n)-b\left(1-\frac{1}{n}-x_n\right)>0,$$ 
which contradicts $(\ref{xnsetting})$. It follows that 
\begin{equation}\label{xnbound}
	x_n< 1-\frac{1}{n}-\bar{\epsilon}.
\end{equation}
Therefore, for all $n\geqslant2$, the function $h_{1,n}(x)$ exists on $[1-\bar{\epsilon},1-1/n+\left(1/(2n)\wedge\epsilon_n\right)]$. Since $f(x,y)=y^2+2c/(1-x)$ is positive on $[0,1)\times\mathbb{R}$, we have that $h_{1,n}(x)$ is an increasing function on $[1-\bar{\epsilon},1-1/n+\left(1/(2n)\wedge\epsilon_n\right)]$, which implies
\begin{equation}\label{upperboundh1n111}
	h_{1,n}(x)\leqslant h_{1,n}\left(1-\frac{1}{n}+\left(\frac{1}{2n}\wedge\epsilon_n\right)\right),~\forall x\in\left[1-\bar{\epsilon},1-\frac{1}{n}+\left(\frac{1}{2n}\wedge\epsilon_n\right)\right].
\end{equation}
By (\ref{boundabove}), $(\ref{xnsetting})$, $(\ref{xnbound})$ and (\ref{upperboundh1n111}), we have
\begin{equation}\label{hbound}
	h_{1,n}(x)\in\left(0,j_n\left(1-\frac{1}{n}+\left(\frac{1}{2n}\wedge\epsilon_n\right)\right)\right],~ \forall x\in\left[1-\bar{\epsilon},1-\frac{1}{n}+\left(\frac{1}{2n}\wedge\epsilon_n\right)\right].
\end{equation}	
Note that $f(x,y)$ is continuous and bounded on the closed rectangle 
$$\bar{S}:~ x\in\left[1-\bar{\epsilon},1-\frac{1}{n}+\left(\frac{1}{2n}\wedge\epsilon_n\right)\right],~ y\in\left[0,j_n\left(1-\frac{1}{n}+\left(\frac{1}{2n}\wedge\epsilon_n\right)\right)\right],$$ 
and $f(x,y)$ satisfies a uniform Lipschitz condition 
$$|f(x,y_1)-f(x,y_2)|\leqslant 2j_n\left(1-\frac{1}{n}+\left(\frac{1}{2n}\wedge\epsilon_n\right)\right)|y_1-y_2|$$ 
on $\bar{S}$. 
The uniqueness of $h_{1,n}(x)$ on $[1-\bar{\epsilon},1-1/n+\left(1/(2n)\wedge\epsilon_n\right)]$ follows from Theorem 10.1 in \cite{agarwal2008introduction}.

Secondly we prove the existence of the limit $h_1(x)$. There exists a positive constant $N$ such that $1-1/n>1-\bar{\epsilon}$ for all $n>N$.  
By ($\ref{hODE}$) and $(\ref{difn})$, we have
\begin{equation}\label{h1n1n+1}
	h_{1,n}\left(1-\frac{1}{n}\right)=g'_1\left(1-\frac{1}{n}\right)>h_{1,n+1}\left(1-\frac{1}{n}\right),~ \forall n>N.
\end{equation}
If there exists a point $\tilde{x}_1\in[1-\bar{\epsilon},1-1/n]$ such that $h_{1,n}(\tilde{x}_1)<h_{1,n+1}(\tilde{x}_1)$, then
$$h'_{1,n}(\tilde{x}_1)=(h_{1,n}(\tilde{x}_1))^2+\frac{2c}{1-\tilde{x}_1}<(h_{1,n+1}(\tilde{x}_1))^2+\frac{2c}{1-\tilde{x}_1}=h'_{1,n+1}(\tilde{x}_1).$$
 By $(\ref{hODE})$ and the comparison theorem, we have $h_{1,n}(x)<h_{1,n+1}(x)$ for all $x\in[\tilde{x}_1,1-1/n]$, which contradicts $(\ref{h1n1n+1})$. Therefore, for $n>N$,
 \begin{equation}\label{comparehnhn1}
 h_{1,n}(x)\geqslant h_{1,n+1}(x),~\forall x\in\left[1-\bar{\epsilon},1-\frac{1}{n}\right].
 \end{equation}
For any fixed $x\in[1-\bar{\epsilon},1)$, there exists $n_x$ large enough such that $x<1-1/n_x$. Since $h_{1,n}(x)>0$ for all $n\geqslant n_x$ and $h_{1,n}(x)$ is monotonically nonincreasing as $n$ increases, we have the existence of $\lim_{n\to\infty}h_{1,n}(x)$ for all $x\in[1-\bar{\epsilon},1)$.

Thirdly we will prove that $h_1(x)$ is a solution to ($\ref{hInitial}$) on $(1-\bar{\epsilon},1)$.
	By $h_1(x)=\lim_{n\to\infty}h_{1,n}(x)$ and $(\ref{hbound})$, we have $h_1(x)\geqslant0$ on $[1-\bar{\epsilon},1)$. By $(\ref{hODE})$ and $(\ref{hbound})$, both $h_{1,n}(x)$ and $h'_{1,n}(x)$ are increasing on $[1-\bar{\epsilon},1)$. For $x\in(1-\bar{\epsilon},1)$ and $\delta\in(0,(x-(1-\bar{\epsilon}))\wedge(1-x))$, since
	\begin{align*}
	\frac{h_1(x+\delta)-h_1(x)}{\delta}	=\lim_{n\to\infty}\frac{h_{1,n}(x+\delta)-h_{1,n}(x)}{\delta}=\lim_{n\to\infty}\frac{1}{\delta}\int_{x}^{x+\delta}h'_{1,n}(y)dy,
	\end{align*}
 and $h'_{1,n}(y)$ is increasing on $(1-\bar{\epsilon},1)$, we have that $(h_1(x+\delta)-h_1(x))/\delta$ is monotonically decreasing as $\delta$ decreases, so the limit $\lim_{\delta\to0^+}(h_1(x+\delta)-h_1(x))/\delta$ exists. 
 Similarly, we have the existence of the limit $\lim_{\delta\to0^+}(h_1(x)-h_1(x-\delta))/\delta$. Since $h'_{1,n}(y)$ is increasing on $(1-\bar{\epsilon},1)$, we have 
 \begin{align*}
 	\frac{h_{1,n}(x)-h_{1,n}(x-\delta)}{\delta}\leqslant h_{1,n}'(x)\leqslant\frac{h_{1,n}(x+\delta)-h_{1,n}(x)}{\delta},
 \end{align*}
 for $x\in(1-\bar{\epsilon},1)$ and $\delta\in(0,(x-(1-\bar{\epsilon}))\wedge(1-x))$.
	Then 
\begin{align*}
		\lim_{\delta\to0^+}\frac{h_1(x+\delta)-h_1(x)}{\delta}&=\lim_{\delta\to0^+}\lim_{n\to\infty}\frac{h_{1,n}(x+\delta)-h_{1,n}(x)}{\delta}\nonumber\\
		&\geqslant\lim_{n\to\infty}h'_{1,n}(x)\nonumber\\
		&=\lim_{n\to\infty}\left(h_{1,n}^2(x)+\frac{2c}{1-x}\right)\nonumber\\
		&=h_{1}^2(x)+\frac{2c}{1-x},
\end{align*}
 and 
\begin{align*}
 \lim_{\delta\to0^+}\frac{h_1(x+\delta)-h_1(x)}{\delta}&=\lim_{\delta\to0^+}\lim_{n\to\infty}\frac{h_{1,n}(x+\delta)-h_{1,n}(x)}{\delta}\nonumber\\
 &\leqslant\lim_{\delta\to0}\lim_{n\to\infty}h'_{1,n}(x+\delta)\nonumber\\
		&=\lim_{\delta\to0}\lim_{n\to\infty}\left(h_{1,n}^2(x+\delta)+\frac{2c}{1-x-\delta}\right)\nonumber\\
		&=\lim_{\delta\to0}h_{1}^2(x+\delta)+\frac{2c}{1-x-\delta}\\
		&=h_{1}^2(x)+\frac{2c}{1-x}.
\end{align*}
Therefore 
\begin{equation}\label{upper}
\lim_{\delta\to0^+}\frac{h_1(x+\delta)-h_1(x)}{\delta}=h_{1}^2(x)+\frac{2c}{1-x}.
\end{equation}
Similarly, we have
\begin{equation}\label{lower}
	\lim_{\delta\to0^+}\frac{h_1(x)-h_1(x-\delta)}{\delta}= h_{1}^2(x)+\frac{2c}{1-x}.
\end{equation}
By $(\ref{upper})$ and $(\ref{lower})$, we have that $h_1'(x)$ exists and $h_1'(x)= h_{1}^2(x)+2c/(1-x)$ on $(1-\bar{\epsilon},1)$.

Finally we prove $h_1(x)$ satisfies the conditions in $(\ref{conditionlemma2})$. Note that for each $x\in(0,1)$, there exists a constant $\bar{N}$ such that $1-1/n>x$ for all $n>\bar{N}$. Therefore, by $(\ref{difn})$, we have $h_{1,n}(x)<g_1'(x)$ for all $n>\bar{N}$, and hence
\begin{equation}\label{h111}
	h_1(x)=\lim_{n\to\infty}h_{1,n}(x)<g_1'(x)<\infty, ~\forall x\in[1-\bar{\epsilon},1).
\end{equation}
Hence $\int_{1-\bar{\epsilon}}^x h_1(y)dy<\infty$ for all $x\in[1-\bar{\epsilon},1)$. 

Now we are going to show that $\lim_{n\to\infty}\int_{1-\bar{\epsilon}}^{1-1/n} h_1(x)dx=\infty$, which completes the proof. By $(\ref{comparehnhn1})$, we have that $g_1'(x)-h_{1,n}(x)$ is nondecreasing as $n$ increases for fixed $x\in(1-\bar{\epsilon},1)$ and large enough $n$. Recall from $(\ref{difn})$ that $g_1'(x)-h_{1,n}(x)>0$ on $(1-\bar{\epsilon},1-1/n)$.
It follows from the Fundamental Theorem of Calculus and the Monotone Convergence Theorem that
 \begin{align}
 	\lim_{n\to\infty}&\left(g_1\left(1-\frac{1}{n}\right)-\int^{1-1/n}_{1-\bar{\epsilon}}h_1(x)dx\right)\nonumber\\
 	&=\int_{1-\bar{\epsilon}}^1\lim_{n\to\infty}\bold{1}_{(1-\bar{\epsilon},1-1/n)}(x)(g_1'(x)-h_1(x))dx+g_1(1-\bar{\epsilon})\nonumber\\
 	&=\int_{1-\bar{\epsilon}}^1\lim_{n\to\infty}\bold{1}_{(1-\bar{\epsilon},1-1/n)}(x)(g_1'(x)-h_{1,n}(x))dx+g_1(1-\bar{\epsilon})\nonumber\\
 	&=\lim_{n\to\infty}\int_{1-\bar{\epsilon}}^{1-1/n}(g_1'(x)-h_{1,n}(x))dx+g_1(1-\bar{\epsilon}).\nonumber
 \end{align}
By $(\ref{g111/n})$ and $(\ref{xnbound})$, we have
$$g'_1(x)-h_{1,n}(x)<b\bar{\epsilon},~\forall x\in\left[1-\bar{\epsilon},1-\frac{1}{n}\right).$$
It follows that
\begin{align*}
	\lim_{n\to\infty}\left(g_1\left(1-\frac{1}{n}\right)-\int^{1-1/n}_{1-\bar{\epsilon}}h_1(x)dx\right)
\leqslant b\bar{\epsilon}^2+g_1(1-\bar{\epsilon})<\infty.
\end{align*}
Recall from $(\ref{Posigprime})$ that $\lim_{n\to\infty}g_1(1-1/n)=\infty$, which implies $\lim_{n\to\infty}\int_{1-\bar{\epsilon}}^{1-1/n} h_1(x)dx=\infty$.
\end{proof}

 \begin{Lemma}\label{equiva1a2lemma}
Suppose $f(x)$ is a differentiable function on $[a_1,a_2]\subseteq\mathbb{R}$ with 
$$|f'(x)|\leqslant k(x)|f(x)|,~\forall x\in[a_1,a_2],$$
where $k(x)$ is integrable on $[a_1,a_2]$. If $f(a_2)=0$ and $\int_{a_1}^{a_2}|k(x)|dx<1$, then $f(x)\equiv0$ on $[a_1,a_2]$.
\end{Lemma}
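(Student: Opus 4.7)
The plan is to use a Gr\"onwall-type argument, relying on the fact that $f$, being differentiable on a compact interval, attains its supremum. Set $M := \sup_{x \in [a_1,a_2]} |f(x)|$; by continuity of $f$ this is finite and attained. The goal is to show $M = 0$.

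First I would apply the Fundamental Theorem of Calculus. Since $f$ is differentiable and the hypothesis gives $|f'(x)| \leq k(x)|f(x)| \leq M\,|k(x)|$ with $k$ integrable, $f'$ is dominated by an integrable function, so FTC applies. Combined with $f(a_2) = 0$ this yields
\begin{equation*}
f(x) \;=\; -\int_x^{a_2} f'(t)\, dt, \qquad \forall x \in [a_1,a_2].
\end{equation*}
Taking absolute values inside the integral and applying the pointwise bound gives
\begin{equation*}
|f(x)| \;\leq\; \int_x^{a_2} |f'(t)|\, dt \;\leq\; \int_x^{a_2} |k(t)|\, |f(t)|\, dt \;\leq\; M \int_{a_1}^{a_2} |k(t)|\, dt.
\end{equation*}

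Now I would take the supremum over $x \in [a_1, a_2]$ on the left, obtaining $M \leq M \cdot \int_{a_1}^{a_2} |k(t)|\, dt$. Since by hypothesis $\int_{a_1}^{a_2} |k(t)|\, dt < 1$, this inequality forces $M = 0$, so $f \equiv 0$ on $[a_1, a_2]$.

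There is no real obstacle here; the argument is essentially a one-shot Gr\"onwall-style estimate anchored at the right endpoint $a_2$ where $f$ vanishes. The only subtlety worth a line of care is justifying the Fundamental Theorem of Calculus, which is handled by the observation that $|f'| \leq M|k|$ is integrable, so $f'$ is integrable and FTC is available for this differentiable function.
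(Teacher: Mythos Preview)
Your proposal is correct and follows essentially the same approach as the paper's proof: both apply the Fundamental Theorem of Calculus anchored at $a_2$, bound $|f(x)|$ by $M\int_{a_1}^{a_2}|k|$, and conclude $M=0$ from the hypothesis $\int_{a_1}^{a_2}|k|<1$. Your version is slightly more careful in explicitly noting that $|f'|\leqslant M|k|$ guarantees integrability of $f'$ so that FTC applies, a point the paper uses without comment.
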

\begin{proof}
By the Fundamental Theorem of Calculus, we have
\begin{equation*}
f(x)=f(a_2)-\int^{a_2}_{x}f'(y)dy=-\int^{a_2}_{x}f'(y)dy, ~\forall x\in[a_1,a_2].
\end{equation*}
Then
\begin{equation*}
|f(x)|\leqslant\int^{a_2}_{x}|f'(y)|dy \leqslant \int^{a_2}_{x}|k(y)f(y)|dy\leqslant \sup_{a_1\leqslant x\leqslant a_2}|f(x)|\int^{a_2}_{a_1}|k(x)|dx,~\forall x\in[a_1,a_2].
\end{equation*}
Since $f(x)$ is continuous on $[a_1,a_2]$, there exists $x_1\in[a_1,a_2]$ such that $|f(x_1)|=\sup_{a_1\leqslant x\leqslant a_2}|f(x)|$. If $\sup_{a_1\leqslant x\leqslant a_2}|f(x)|>0$, then 
$$|f(x_1)|\leqslant \sup_{a_1\leqslant x\leqslant a_2}|f(x)|\int^{a_2}_{a_1}|k(x)|dx<\sup_{a_1\leqslant x\leqslant a_2}|f(x)|=|f(x_1)|,$$
which is a contradiction. Therefore $\sup_{a_1\leqslant x\leqslant a_2}|f(x)|=0$, which implies $f(x)\equiv0$ on $[a_1,a_2]$.
\end{proof}

\begin{Lemma}\label{lemma9delta1}
There exists a constant $\delta>0$ such that on $[1-\delta,1)$, there exists a unique solution to the ordinary differential equation
\begin{equation}\label{hIn}
h'(x)=(h(x))^2+\frac{2c}{1-x},
\end{equation}
satisfying
\begin{equation}\label{conditionlemma3}
	0<h(x)<\frac{1}{1-x}, ~\forall x\in[1-\delta,1) \text{ and } \lim_{x\to 1^-}\int_{1-\delta}^x h(y)dy=\infty. 
\end{equation}
\end{Lemma}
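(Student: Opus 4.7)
For existence, the function $h_1$ constructed in Lemma \ref{lemma7hn} will serve, provided I choose $\delta\leqslant\bar\epsilon$. Indeed $h_1$ solves the ODE \eqref{hIn} by \eqref{hInitial}, lies strictly between $0$ and $g_1'(x)$ and hence strictly below $1/(1-x)$ by \eqref{h111} together with \eqref{1/1x}, and satisfies the integral divergence by \eqref{conditionlemma2}. So the only substantive task is uniqueness.

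The plan for uniqueness is to linearize the Riccati equation \eqref{hIn}. Given any solution $h$ satisfying \eqref{conditionlemma3}, I set $u(x)=\exp\bigl(-\int_{1-\delta}^x h(y)\,dy\bigr)$. Then $u(1-\delta)=1$, $u>0$ on $[1-\delta,1)$, and $u(x)\to 0$ as $x\to 1^-$ by the integral divergence. A direct computation using $u'=-hu$ and \eqref{hIn} shows that $u$ satisfies the linear second-order equation $u''(x)+\frac{2c}{1-x}u(x)=0$. Thus every solution $h$ of the stated boundary value problem corresponds bijectively to a positive solution $u$ of this linear ODE with $u(1-\delta)=1$ and $u(x)\to 0$ as $x\to 1^-$.

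Now suppose $h,\tilde h$ are two such solutions with associated exponentials $u,\tilde u$. Since the linear ODE has no first-derivative term, the Wronskian $W(x)=u(x)\tilde u'(x)-u'(x)\tilde u(x)$ is constant on $[1-\delta,1)$; substituting $u'=-hu$ and $\tilde u'=-\tilde h\tilde u$ gives $W(x)=u(x)\tilde u(x)\bigl(h(x)-\tilde h(x)\bigr)$. By \eqref{conditionlemma3}, $|h-\tilde h|<2/(1-x)$, so $|W(x)|\leqslant 2u(x)\tilde u(x)/(1-x)$. If I can show $u(x)\tilde u(x)/(1-x)\to 0$ as $x\to 1^-$, then the constant $W$ must be $0$, whence $h(1-\delta)=\tilde h(1-\delta)$ and Picard uniqueness for the first-order ODE \eqref{hIn} forces $h\equiv\tilde h$.

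The main obstacle is therefore the asymptotic bound $u(x)=O(1-x)$ (and similarly for $\tilde u$), which is equivalent to showing that $\alpha(x):=(1-x)h(x)\to 1$ as $x\to 1^-$ at rate $1-\alpha(x)=O(1-x)$; one then gets $\int_{1-\delta}^x h\geqslant \log\bigl(1/(1-x)\bigr)-C$ and hence $u(x)\leqslant C'(1-x)$. The ODE for $\alpha$ is $\alpha'(x)=\alpha(\alpha-1)/(1-x)+2c$, with $\alpha\in(0,1)$ on $[1-\delta,1)$. A phase-plane argument on this scalar equation identifies $\alpha=1-c(1-x)+O((1-x)^2)$ as the only stable branch: if $\alpha$ were bounded away from $1$ along a sequence $x_n\to 1^-$, then on that sequence the driving term $\alpha(\alpha-1)/(1-x)$ would be so negative that $\alpha$ would be forced below zero in finite $x$-distance, contradicting $\alpha>0$. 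Making the rate $1-\alpha=O(1-x)$ rigorous is the delicate step, and I expect to use Lemma \ref{equiva1a2lemma} applied on a short interval near $x=1$ to the linearization $\alpha=1-c(1-x)+\phi$, for which the perturbation $\phi$ satisfies an ODE whose zero solution is pinned down by the requirement that $\phi$ remain bounded.
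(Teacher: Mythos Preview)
Your linearization approach via $u=\exp(-\int h)$ and the constant Wronskian is genuinely different from the paper's proof, which instead introduces the substitution $c_i(x)=1/(h_i(x)+2c\log(1-x))+x$, shows $c_i(x)\to 1$ as $x\to 1^-$, derives a first-order ODE for $c_2-c_1$ with an integrable Lipschitz coefficient, and then applies Lemma~\ref{equiva1a2lemma} directly to $c_2-c_1$ on a short interval. Your route is conceptually cleaner (a two-dimensional linear ODE with constant Wronskian), while the paper's substitution is tailored so that Lemma~\ref{equiva1a2lemma} applies immediately.

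There is, however, a real gap in your phase-plane step. The claim that $\alpha$ ``would be forced below zero'' is false: as $\alpha\downarrow 0$ the driving term $\alpha(\alpha-1)/(1-x)$ vanishes and the $+2c$ keeps $\alpha$ positive. What actually happens is that once $\alpha$ drops below any fixed level $1-\eta$ for $s=-\log(1-x)$ large, it is trapped below and, since $d\alpha/ds=\alpha(\alpha-1)+2ce^{-s}$, one gets $d\alpha/ds\le -\alpha/2+2ce^{-s}$ once $\alpha<1/2$, hence $\alpha(s)=O(e^{-s/2})$ and $\int\alpha\,ds<\infty$. This contradicts the divergence hypothesis in \eqref{conditionlemma3}, and \emph{that} is how $\alpha\to 1$ is forced; your argument as written does not invoke the integral condition at the crucial moment.

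Secondly, you are asking for more than you need. You do not require the rate $1-\alpha=O(1-x)$, nor $u=O(1-x)$, nor Lemma~\ref{equiva1a2lemma}. Once you know $\alpha,\tilde\alpha\to 1$, pick $\epsilon<1/2$ and $s_1$ with $\alpha,\tilde\alpha>1-\epsilon$ for $s\ge s_1$; then
\[
\frac{u(x)\tilde u(x)}{1-x}=\exp\Bigl(s-\int_{s_0}^s\alpha-\int_{s_0}^s\tilde\alpha\Bigr)\le C\,e^{-(1-2\epsilon)s}\to 0,
\]
so the constant Wronskian $W=u\tilde u(h-\tilde h)$ is zero, and since $u,\tilde u>0$ you get $h\equiv\tilde h$ directly. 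With the corrected phase-plane argument and this simplification, your proof goes through.
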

\begin{proof}
The existence of a solution to $(\ref{hIn})$ satisfying $(\ref{conditionlemma3})$ follows directly from Lemma~\ref{lemma7hn}.

	Now we prove the uniqueness of the solution. Assume that there are two different functions $h_1(x)$ and $h_2(x)$ that are solutions to $(\ref{hIn})$ and both satisfy condition $(\ref{conditionlemma3})$. Then 
	\begin{equation}\label{h'1x}
		h_i'(x)=(h_i(x))^2+\frac{2c}{1-x}> \frac{2c}{1-x}, ~\forall x\in[1-\delta,1),~i=1,2.
	\end{equation}
 If $h_i(x)< -2c\log(1-x)$ for all $x\in(1-\delta,1)$, then $\int_{1-\delta}^1 h_i(y)dy<\infty$, which contradicts $(\ref{conditionlemma3})$, so there exist $\Delta_1,~\Delta_2\in(0,\delta)$ such that 
 \begin{equation}\label{h1x}
 	h_i(1-\Delta_i)> -2c\log(1-(1-\Delta_i)),~ i=1,2.
 \end{equation}
Let $\Delta_3=\Delta_1\wedge\Delta_2$. By $(\ref{h'1x})$ and $(\ref{h1x})$, we have 
\begin{equation*}\label{h1xlowerbound}
	h_i(x)> -2c\log(1-x), ~\forall x\in(1-\Delta_3,1),~ i=1,2.
\end{equation*}
Let 
\begin{equation}\label{cihi}
	c_i(x)\coloneqq \frac{1}{h_i(x)+2c\log(1-x)}+x,~i=1,2.
\end{equation}
Then
\begin{equation}\label{hici}
	h_i(x)=\frac{1}{c_i(x)-x}-2c\log(1-x).
\end{equation}
Since $h_i(x)<1/(1-x)$ on $(1-\Delta_3,1)$ and $-2c\log(1-x)>0$, we have 
\begin{equation}\label{cixlowerbound}
	c_i(x)>1,~\forall x\in(1-\Delta_3,1).
\end{equation}
By $(\ref{hIn})$,  $(\ref{cihi})$ and $(\ref{hici})$, on $(1-\Delta_3,1)$, 
\begin{align}\label{c'x}
c_i'(x)=\frac{-(h_i'(x)-2c/(1-x))}{(h_i(x)+2c\log(1-x))^2}+1&=-h_i(x)^2\cdot\frac{1}{(h_i(x)+2c\log(1-x))^2}+1\nonumber\\
&=-\left(\frac{1}{c_i(x)-x}-2c\log(1-x)\right)^2(c_i(x)-x)^2+1\nonumber\\
&=4c\log(1-x)(c_i(x)-x)-4c^2\log^2(1-x)(c_i(x)-x)^2.
\end{align}
By $(\ref{cixlowerbound})$ and $(\ref{c'x})$, we have $c'_i(x)<0$ on $(1-\Delta_3,1)$. It then follows that the limit $\lim_{x\to1^-}c_i(x)$ exists, and 
$$h_i(x)<\frac{1}{\lim_{x\to1^-}c_i(x)-x}-2c\log(1-x),~\forall x\in(1-\Delta_3,1).$$
If $\lim_{x\to1^-}c_i(x)>1$, then $\int_{1-\Delta_3}^1h_i(x)dx<\infty$, which contradicts $(\ref{conditionlemma3})$. Therefore,
\begin{equation*}\label{limitc}
 \lim_{x\to1^-}c_i(x)=1,~i=1,2.
\end{equation*}
It follows from $(\ref{c'x})$ that on $(1-\Delta_3,1)$,
\begin{align}
&\frac{d(c_2(x)-c_1(x))}{dx}\nonumber\\
&~~=4c\log(1-x)(c_2(x)-c_1(x))-4c^2\log^2(1-x)(c_2(x)-c_1(x))(c_2(x)+c_1(x)-2x).\label{diffder}
\end{align}
Since $\lim_{x\to1^-}c_i(x)=1$ and $c_i(x)>1$ for all $x\in(1-\Delta_3,1)$, there exists $\Delta_4\in(0,\Delta_3)$ such that 
\begin{equation}\label{lessthan4}
	0\leqslant c_2(x)+c_1(x)-2x\leqslant 1,~\forall x\in[1-\Delta_4,1].
\end{equation}
By $(\ref{diffder})$ and $(\ref{lessthan4})$, we have 
\begin{equation*}
	\left|\frac{d(c_2(x)-c_1(x))}{dx}\right|<\left(-4c\log(1-x)+4c^2\log^2(1-x)\right)\left|c_2(x)-c_1(x)\right|, ~\forall x\in[1-\Delta_4,1].
\end{equation*}
Since $\int_0^1(-4c\log(1-x)+4c^2\log^2(1-x))dx$ is bounded, there exists $\Delta_5\in(0,\Delta_4]$ such that 
$$\int_{1-\Delta_5}^1\left(-4c\log(1-x)+4c^2\log^2(1-x)\right)dx<1.$$ 
By Lemma \ref{equiva1a2lemma}, we have $c_1(x)\equiv c_2(x)$ on $[1-\Delta_5,1]$. It follows that 
\begin{align}\label{equivh1h21}
	h_1(x)= h_2(x),~\forall x\in[1-\Delta_5,1).
\end{align} 
Recall the definition of the function $f(x,y)$ from $(\ref{fderivative})$. Since $f(x,y)=y^2+2c/(1-x)$ is continuous and bounded on the closed rectangle 
$$\bar{S}':~x\in\left[1-\delta,1-\frac{\Delta_5}{2}\right],~y\in\left[0,\frac{2}{\Delta_5}\right],$$ 
and $f(x,y)$ satisfies a uniform Lipschitz condition 
$$|f(x,y_1)-f(x,y_2)|\leqslant \frac{4}{\Delta_5}|y_1-y_2|$$ 
on $\bar{S}'$, by Theorem 10.1 in \cite{agarwal2008introduction}, there exists at most one solution on $[1-\delta,1-\Delta_5/2]$ satisfying $h(1-\Delta_5)=h_1(1-\Delta_5)$. It follows that 
\begin{align}\label{equivh1h22}
	h_1(x)\equiv h_2(x),~\forall x\in\left[1-\delta,1-\frac{\Delta_5}{2}\right].
\end{align}
By $(\ref{equivh1h21})$ and $(\ref{equivh1h22})$, we have $h_1(x)\equiv h_2(x)$ on $[1-\delta,1)$. It follows that the solution to $(\ref{hIn})$ with condition $(\ref{conditionlemma3})$ is unique.
\end{proof}

\begin{Lemma}\label{lemma10lalphag}
	Let $\alpha$ be a positive constant. Then there exists $\tilde{\epsilon}>0$ such that for any $\epsilon\in(0,\tilde{\epsilon})$, if there exists a function $l_\epsilon(x)$ on $[1-\epsilon,1)$ that satisfies the ordinary differential equation
	\begin{equation}\label{lODE}
	l''_\epsilon(x)=(l'_\epsilon(x))^2+\frac{2c}{1-x},~ l'_\epsilon(1-\epsilon)=g'_1(1-\epsilon)-\alpha, \text{ and } l_\epsilon(1-\epsilon)<\infty,
	\end{equation}
	then $l_\epsilon(x)<\infty$ for all $x\in[1-\epsilon,1)$ and $\lim_{x\to 1^-}l_\epsilon(x)$ exists and is finite.
\end{Lemma}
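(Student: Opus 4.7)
The plan is to linearize the Riccati equation $(\ref{lODE})$ via the substitution $w(x)\coloneqq e^{-l_\epsilon(x)}$, which converts it into the linear second-order ODE
\begin{equation*}
w''(x)+\frac{2c}{1-x}\,w(x)=0,\qquad x\in[1-\epsilon,1),
\end{equation*}
with $w>0$ on $[1-\epsilon,1)$. The assertion that $\lim_{x\to 1^-}l_\epsilon(x)$ exists and is finite is equivalent to $\lim_{x\to 1^-}w(x)>0$.

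Next I would apply the Frobenius method near the regular singularity $x=1$. In the variable $y=1-x$ the equation reads $W''(y)+(2c/y)W(y)=0$, whose indicial roots $0$ and $1$ differ by an integer, so one of the two linearly independent Frobenius solutions carries a logarithmic term. A direct computation gives
\begin{equation*}
W_2(y)=y-cy^2+O(y^3),\qquad W_1(y)=1-2c\,y\log y+O(y),
\end{equation*}
with constant Wronskian $W_1W_2'-W_1'W_2\equiv 1$. Hence $w(x)=A\,W_1(1-x)+B\,W_2(1-x)$ for uniquely determined $A,B$, and $w(x)\to A$ as $x\to 1^-$. This yields a trichotomy: if $A>0$, then $l_\epsilon(x)\to -\log A$, which is finite; if $A=0$ with $B>0$, then $w(x)\sim B(1-x)$ and $l_\epsilon(x)\to +\infty$; if $A<0$, then $w$ must cross zero in $(1-\epsilon,1)$ by the intermediate value theorem, contradicting the standing hypothesis that $l_\epsilon$ is defined on $[1-\epsilon,1)$. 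The hypothesis already excludes the last case, so the crux is to rule out $A=0$.

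From the Wronskian identity, $A=w(1-\epsilon)\bigl[W_2'(\epsilon)-W_2(\epsilon)\,l_\epsilon'(1-\epsilon)\bigr]$. Substituting $l_\epsilon'(1-\epsilon)=g_1'(1-\epsilon)-\alpha$ and writing $\nu_\epsilon\coloneqq 1-\epsilon\,g_1'(1-\epsilon)$ (which is nonnegative since $R_1$ is convex, so by the mean value theorem $g_1'(1-\epsilon)\leq 1/\epsilon$), a Taylor expansion in $\epsilon$ yields
\begin{equation*}
\frac{A}{w(1-\epsilon)}=\nu_\epsilon+\epsilon(\alpha-c)+O\bigl(\epsilon^2+\epsilon\nu_\epsilon\bigr).
\end{equation*}
Because $\mu_1\neq\delta_1$ places mass on some $n\geq 2$, $R_1''>0$ on $(0,1)$ and thus $\nu_\epsilon>0$ strictly. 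For $\alpha\geq c$ the two main terms are nonnegative with at least one strictly positive, so $A>0$ for all small $\epsilon$. For $0<\alpha<c$, either $\nu_\epsilon>\epsilon(c-\alpha)$ for all small $\epsilon$, giving $A>0$ and the desired finite limit, or the reverse inequality holds so that $A<0$, in which case the existence hypothesis fails and the implication is vacuous. Taking $\tilde{\epsilon}$ sufficiently small in terms of $\alpha$ covers all $\epsilon\in(0,\tilde{\epsilon})$.

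The main obstacle is the Frobenius calculation itself --- correctly isolating the $y\log y$ contribution in $W_1$ and tracking it through the Wronskian identity to an accuracy of order $\epsilon$. A subsidiary subtlety is the borderline case $\nu_\epsilon/\epsilon\to c-\alpha$ (relevant when $\mu_1$ has finite second moment and $\alpha$ equals the specific threshold $c-R_1''(1)/(2R_1'(1))$), where one further Taylor term in $R_1$ at $1$ is needed to settle the sign of $A$; this is routine given $\mu_1\in\mathcal{S}_1$.
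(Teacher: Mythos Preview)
Your linearization $w=e^{-l_\epsilon}$ and the Frobenius analysis at $x=1$ are correct, and the identity
\[
\frac{A}{w(1-\epsilon)}=W_2'(\epsilon)-\bigl(g_1'(1-\epsilon)-\alpha\bigr)W_2(\epsilon)=\nu_\epsilon+(\alpha-c)\epsilon+O(\epsilon^2+\epsilon\nu_\epsilon)
\]
is right. This is a genuinely different route from the paper, which never linearizes: it instead bounds $l_\epsilon'<g_1'$ by a direct comparison of the two second-order equations and then, assuming $\lim_{x\to1^-}l_\epsilon(x)=\infty$, produces a second solution $\tilde l_\epsilon$ (with $\tilde\alpha\in(0,\alpha)$) and invokes the uniqueness of Lemma~\ref{lemma9delta1} to derive a contradiction. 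Your approach would bypass Lemma~\ref{lemma9delta1} entirely, which is attractive.

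However, the step where you determine the sign of $A$ for $0<\alpha<c$ has a gap. The claimed dichotomy ``either $\nu_\epsilon>\epsilon(c-\alpha)$ for all small $\epsilon$, or the reverse'' is not justified: nothing you have written prevents $\nu_\epsilon-\epsilon(c-\alpha)$ from changing sign, or from vanishing to the order you control, along a sequence $\epsilon_n\to0$. Since $A=0$ occurs precisely when $g_1'(1-\epsilon)-\alpha$ equals $W_2'(\epsilon)/W_2(\epsilon)$, and the latter is exactly $h_1(1-\epsilon)$ for the special solution $h_1$ of Lemma~\ref{lemma7hn}, ruling out $A=0$ is tantamount to showing $g_1'(1-\epsilon)-h_1(1-\epsilon)<\alpha$ for small $\epsilon$---which is essentially what the lemma is used to prove in Section~\ref{proofoftheo1}. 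Your remark that the borderline $\nu_\epsilon/\epsilon\to c-\alpha$ is ``routine given $\mu_1\in\mathcal S_1$'' does not help: $\mathcal S_1$ gives only a finite first moment, and one more Taylor term requires a third moment.

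The missing ingredient is short: from Lemma~\ref{ProbGenFunEqu1} one has $R_1''(x)=2cR_1(x)(1-R_1(x))/(x(1-x))$, and since $(1-R_1(x))/(1-x)\to R_1'(1)<\infty$ as $x\to1^-$, it follows that $R_1''(1)=2cR_1'(1)$. Hence $\nu_\epsilon=\dfrac{R_1''(1)}{2R_1'(1)}\,\epsilon+o(\epsilon)=c\epsilon+o(\epsilon)$, and your expansion collapses to
\[
\frac{A}{w(1-\epsilon)}=\alpha\,\epsilon+o(\epsilon)>0
\]
for every fixed $\alpha>0$ and all sufficiently small $\epsilon$. With this one line added, your Frobenius argument is complete and in fact establishes the stronger statement that the hypothesis of the lemma is always satisfied and the limit is finite, without appealing to Lemma~\ref{lemma9delta1}.
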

\begin{proof}
Recall from $(\ref{defineg})$ that $R_1(x)=1-e^{-g_1(x)}$
 for all $x\in(0,1)$. It then follows from $(\ref{g})$ and ($\ref{lODE}$) that
\begin{align}
g_1''(x)-l_\epsilon''(x)
&=(g_1'(x))^2-(l_\epsilon'(x))^2+2c\frac{1}{1-x}\left(\frac{1-e^{-g_1(x)}}{x}-1\right)\nonumber\\
&=(g_1'(x)-l_\epsilon'(x))(g_1'(x)+l_\epsilon'(x))+2c\frac{R_1(x)-x}{x-x^2}\label{difflg}.
\end{align}
Recall from $(\ref{Posigprime})$ that $\lim_{x\to1^-}g_1'(x)=\infty$, and from ($\ref{boundbelow}$) that $2c(R_1(x)-x)/(x-x^2)>-b$ for all $x\in(0,1)$ for some positive constant $b$. Therefore, there exists 
$\tilde{\epsilon}>0$ such that
\begin{equation}\label{lemma5eq3}
\alpha g_1'(1-\epsilon)+2c\frac{R_1(1-\epsilon)-(1-\epsilon)}{(1-\epsilon)-(1-\epsilon)^2}>0, ~\forall \epsilon\in(0,\tilde{\epsilon}),
\end{equation}
and
\begin{equation}\label{lemma5eqeq}
	g_1'(1-\epsilon)-\alpha>0,~\forall \epsilon\in(0,\tilde{\epsilon}).
\end{equation}
By ($\ref{lODE}$) and ($\ref{lemma5eqeq}$) and the Fundamental Theorem of Calculus, for all $x\in[1-\epsilon,1)$,
\begin{equation}\label{l''l'}
l_\epsilon'(x)=	l_\epsilon'(1-\epsilon)+\int_{1-\epsilon}^xl_\epsilon''(y)dy=g_1'(1-\epsilon)-\alpha+\int_{1-\epsilon}^x\left((l'_\epsilon(y))^2+\frac{2c}{1-y}\right)dy>0.
\end{equation}
By ($\ref{lODE}$), ($\ref{difflg}$), ($\ref{lemma5eq3}$) and ($\ref{lemma5eqeq}$), we have 
\begin{align*}
g_1''(1-\epsilon)&-l_\epsilon''(1-\epsilon)\\
&=(g_1'(1-\epsilon)-l_\epsilon'(1-\epsilon))(g_1'(1-\epsilon)+l_\epsilon'(1-\epsilon))+2c\frac{R_1(1-\epsilon)-(1-\epsilon)}{(1-\epsilon)-(1-\epsilon)^2}\\
&=\alpha g_1'(1-\epsilon)+\alpha (g_1'(1-\epsilon)-\alpha)+2c\frac{R_1(1-\epsilon)-(1-\epsilon)}{(1-\epsilon)-(1-\epsilon)^2}\\
&>0, ~\forall \epsilon\in(0,\tilde{\epsilon}).
\end{align*}

We now show $g_1''(x)>l_\epsilon''(x)$ on $[1-\epsilon,1)$. Recall from $(\ref{lODE})$ that $l_\epsilon'(1-\epsilon)=g'_1(1-\epsilon)-\alpha$. Then 
\begin{equation}\label{sthforget1} 
g_1'(x)-l_\epsilon'(x)=\alpha+\int_{1-\epsilon}^x(g_1''(y)-l_\epsilon''(y))dy>0, ~\forall x\in[1-\epsilon,1).
\end{equation}
Recall from $(\ref{1/1x})$ that $g_1'(x)\leqslant1/(1-x)<\infty$ for all $x\in[0,1)$. It follows that $l'_\epsilon(x)<g_1'(x)<\infty$ for all $x\in[1-\epsilon,1)$. By ($\ref{difflg}$), (\ref{sthforget1}) and the Fundamental Theorem of Calculus, we have
\begin{equation}\label{diffg1l}
g_1''(x)-l_\epsilon''(x)=(g_1'(x)+l_\epsilon'(x))\left(\alpha+\int_{1-\epsilon}^x(g_1''(y)-l_\epsilon''(y))dy\right)+2c\frac{R_1(x)-x}{x-x^2},~\forall x\in[1-\epsilon,1).
\end{equation}
Note that $g_1''(x)-l_\epsilon''(x)$ is continuous on $[1-\epsilon,1)$. We will show $g_1''(x)>l_\epsilon''(x)$ on $[1-\epsilon,1)$ by contradiction. If there exists $x_1\in(1-\epsilon,1)$ such that $g_1''(x_1)-l_\epsilon''(x_1)\leqslant0$, then there exists $x_2\in(1-\epsilon,x_1]$ such that 
\begin{equation}\label{diffg1lx2}
	g_1''(x)-l_\epsilon''(x)>0,~\forall x\in[1-\epsilon,x_2),
\end{equation}
and 
\begin{equation}\label{atx2}
	g_1''(x_2)-l_\epsilon''(x_2)=0.
\end{equation}
By ($\ref{l''l'}$), $(\ref{diffg1l})$ and $(\ref{diffg1lx2})$, we have
$$g_1''(x_2)-l_\epsilon''(x_2)>\alpha(g_1'(x_2)+l_\epsilon'(x_2))+2c\frac{R_1(x_2)-x_2}{x_2-x_2^2}>\alpha g_1'(x_2)+2c\frac{R_1(x)-x_2}{x_2-x_2^2}.$$
The right-hand side is positive because of ($\ref{lemma5eq3}$). It follows that $g_1''(x_2)-l_\epsilon''(x_2)>0$, which contradicts $(\ref{atx2})$. Therefore, we have $g_1''(x)-l_\epsilon''(x)>0$ on $[1-\epsilon,1)$. 

We know from ($\ref{l''l'}$) that $l_{\epsilon}(x)$ is monotonically increasing on $[1-\epsilon,1)$. If $\lim_{x\to 1^-}l_\epsilon(x)$ is $\infty$, then by replacing the $\alpha$ in ($\ref{lODE}$) with another constant $\tilde{\alpha}\in(0,\alpha)$ we can get another solution $\tilde{l}_\epsilon(x)$. It is easy to check that $g_1'(x)>\tilde{l}_\epsilon'(x)>l_\epsilon'(x)$ on $[1-\epsilon,1)$. It follows that $\lim_{x\to 1^-}\tilde{l}_\epsilon(x)=\infty$. Since $g_1'(x)<1/(1-x)$ on $(0,1)$, we have that $l_\epsilon'(x)<1/(1-x)$ and $\tilde{l}_\epsilon'(x)<1/(1-x)$. Therefore $l_\epsilon'(x)$ and $\tilde{l}_\epsilon'(x)$ are two different solutions to $(\ref{hIn})$ satisfying condition $(\ref{conditionlemma3})$, which contradicts Lemma \ref{lemma9delta1}. This implies that $\lim_{x\to 1^-}l_\epsilon(x)$ exists and is finite.
\end{proof}

The proof of the uniqueness of the solution to the recursive distributional function $(\ref{First})$ will be given in section \ref{proofoftheo1}, using the lemmas proved above in this subsection.

\subsection{Proof of Theorem \ref{existunique}}\label{proofoftheo1}
\begin{proof}[Proof of Theorem \ref{existunique}]
	The existence of $\mu_c^*$ has been shown in section \ref{chap2.1existence}. Now we prove the uniqueness of the distribution by contradiction. Suppose $\mu_1$ and $\mu_2$ defined in $(\ref{defofmu1star})$ and $(\ref{defofmu2star})$ are two different distributions. Then their probability generating functions $R_1$ and $R_2$ are different. By Lemma \ref{newwgprime}, we have $g_1(x)=-\log(1-R_1(x))$ and $g_2(x)=-\log(1-R_2(x))$ are two different solutions to $(\ref{g})$ that satisfy $(\ref{1/1x})$. In addition, the functions $g_1$ and $g_2$ satisfy $(\ref{g'})$ and $(\ref{g''2})$. Note that the conclusion of Lemma \ref{lemma7hn} still holds if $g_1$ is replaced by $g_2$. Let $h_{i,n}(x)$ be the function that satisfies the ordinary differential equation with initial condition
	\begin{equation*}\label{hODE2}
	h'_{i,n}(x)=(h_{i,n}(x))^2+\frac{2c}{1-x},~h_{i,n}\left(1-\frac{1}{n}\right)=g_i'\left(1-\frac{1}{n}\right),~i=1,2.
	\end{equation*}
	By $(\ref{1/1x})$, $(\ref{h111})$ and Lemma \ref{lemma7hn}, there exists some constant $\tilde{\epsilon}>0$ such that for $i=1,2$, we have $h_i(x)=\lim_{n\to\infty}h_{i,n}(x)$ exists on $[1-\tilde{\epsilon},1)$, and $h_i$ satisfies 
\begin{equation*}\label{hIn''}
h_i'(x)=(h_i(x))^2+\frac{2c}{1-x},
\end{equation*}
\begin{equation*}\label{conditionlemma6}
	0<h_i(x)<\frac{1}{1-x}, ~\forall x\in(1-\tilde{\epsilon},1),
\end{equation*}
and	
\begin{equation}\label{conditionlemma6'}
 \lim_{x\to1^-}\int_{1-\tilde{\epsilon}}^x h_i(y)dy=\infty,~ i=1,2.
\end{equation}

Now we want to show $h_1$ and $h_2$ are two different functions.	
If there exists an $\alpha>0$ such that $g_1'(x)-h_1(x)>\alpha$ for all $x\in(1-\tilde{\epsilon},1)$, then by Lemma \ref{lemma10lalphag}, we have 
$$\lim_{x\to1^-}\int_{1-\tilde{\epsilon}}^xh_1(y)dy<\infty,$$ 
which contradicts $(\ref{conditionlemma6'})$. It follows that for all $\alpha>0$, there exists some $x\in(1-\tilde{\epsilon},1)$ such that $g_1'(x)-h_1(x)\leqslant \alpha$. By $(\ref{g'})$ and $(\ref{g''2})$, we have 
$$g_1'(x)-g_2'(x)>g_1'(1-\tilde{\epsilon})-g_2'(1-\tilde{\epsilon})>0,~\forall x\in(1-\tilde{\epsilon},1).$$
Let $\alpha=g_1'(1-\tilde{\epsilon})-g_2'(1-\tilde{\epsilon})$. 
Then there exists $x\in(1-\tilde{\epsilon},1)$ such that 
$$g_1'(x)-h_1(x)\leqslant \alpha=g_1'(1-\tilde{\epsilon})-g_2'(1-\tilde{\epsilon})<g_1'(x)-g_2'(x),$$ 
which implies that $h_1(x)>g_2'(x)$. By $(\ref{h111})$, we have $g_2'(x)>h_2(x)$ for all $x\in(1-\tilde{\epsilon},1)$. It follows that $h_1$ and $h_2$ are two different functions. However, this contradicts Lemma \ref{lemma9delta1}, which implies that $\mu_1$ and $\mu_2$ are the same distribution. 

Suppose $\mu_c^*$ is a distribution which satisfies $(\ref{Frecur1})$. Then $\delta_1\preceq\mu_c^*\preceq\delta_\infty$. By the monotonicity of $F_c$, which was proved in Lemma \ref{3mono}, we have 
$$\mu_1=\lim_{n\to\infty}F_c^n(\delta_1)\preceq\lim_{n\to\infty}F_c^n(\mu_c^*)\preceq\lim_{n\to\infty}F_c^n(\delta_\infty)=\mu_2.$$ 
Note that $\lim_{n\to\infty}F^n(\mu_c^*)=\mu_c^*$. Since $\mu_1$ and $\mu_2$ are the same distribution, we have 
\begin{equation*}\label{equalofmu120}
	\mu_1=\mu_c^*=\mu_2,
\end{equation*}
which implies the uniqueness of the solution to $(\ref{Frecur1})$. Since $(\ref{Frecur1})$ and $(\ref{First})$ are equivalent, we have the uniqueness of the solution to the recursive distributional equation $(\ref{First})$. 
\end{proof}

\section{Results on the Yule-Kingman Nested Coalescent}
\subsection{Results on the Yule Tree}\label{sec31}

Consider a Yule tree. Denote the first time when the number of branches reaches $m$ as $u=0$. Let the number of branches at time $u$ be $R(u)$. Then $R(0)=m$ and $R(0^-)=m-1$. If we consider the the portion of the tree after time $u=0$, then we have $m$ subtrees. 

In the following paragraph, we are going to use the same notations for the subtree, branching times and number of lineages as in \cite{blancas2019}. We can place the $m$ subtrees in random order and denote them by $\mathcal{T}^{l,m}$, $l\in\{1,\ldots,m\}$. Let $V^{l,m}$ denote the time when the initial branch splits into two. Let $V^{l,m}_{1}$ and $V^{l,m}_{2}$ denote the times when the two branches created at time $V^{l,m}$ split into two respectively. Given $V^{l,m}_{i_{1}\ldots i_{d}}$, define $V^{l,m}_{i_{1}\ldots i_{d}1}$ and $V^{l,m}_{i_{1}\ldots i_{d}2}$ to be the times when the two branches created at time $V^{l,m}_{i_{1}\ldots i_{d}}$ split again. Let $U^{s+1}_{m}=\inf\{u:R(u)=s+1\}$.

\begin{center}
\setlength{\unitlength}{0.75cm}
\begin{picture}(22,8.5)
\linethickness{0.3mm}
\put(0,1){\line(2,1){11}}
\put(11,6.5){\line(2,-1){11}}
\put(11,6.5){\line(0,1){1.5}}
\put(6,4){\line(2,-1){6}}
\put(17.2,3.4){\line(-2,-1){4.8}}
\put(1,1.5){\line(2,-1){1}}
\put(8,3){\line(-2,-1){4}}
\put(20,2){\line(-2,-1){2}}
\put(11,6.5){\circle*{0.2}}
\put(6,4){\circle*{0.2}}
\put(17.2,3.4){\circle*{0.2}}
\put(1,1.5){\circle*{0.2}}
\put(8,3){\circle*{0.2}}
\put(20,2){\circle*{0.2}}
\put(11,8){\circle*{0.2}}
\put(11.2,8){$u=0$}
\put(11.2,6.5){$V^{l,m}$}
\put(4.5,4){$V_1^{l,m}$}
\put(17.5,3.4){$V_2^{l,m}$}
\put(0,1.5){$V_{11}^{l,m}$}
\put(8.5,3){$V_{12}^{l,m}$}
\put(20.3,2){$V_{22}^{l,m}$}
\end{picture}

\vspace{-.2in}
Figure 3: The tree ${\cal T}^{l,m}$.
\end{center}

\begin{Lemma}\label{lemma12resultonyule}
Fix a positive integer $d$. Then 
$$\lim_{s\to\infty}P\left(U_{m}^{s+1}\geqslant\max\left\{V_{i_{1}\ldots i_{d}}^{l,m}, i_{1},\ldots,i_{d}\in\{1,2\}, 1\leqslant l\leqslant m\right\}\right)=1.$$
\end{Lemma}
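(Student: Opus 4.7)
The plan is to observe that, as $s \to \infty$, the left-hand side $U_m^{s+1}$ diverges almost surely while the right-hand side is a maximum of a fixed finite collection of almost surely finite random variables and hence stays bounded in probability uniformly in $s$. A simple union bound then yields the claim.

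First I would verify that
\begin{equation*}
M := \max\{V^{l,m}_{i_1 \ldots i_d} : 1 \leqslant l \leqslant m,\ i_1, \ldots, i_d \in \{1,2\}\}
\end{equation*}
is almost surely finite. The index set has exactly $m \cdot 2^d$ elements. By construction, each $V^{l,m}_{i_1 \ldots i_d}$ is the time of a splitting event reached by following a specific path of length $d+1$ in the Yule tree rooted at subtree $l$. By the memoryless property of the exponential, this time can be written as a sum of $d+1$ independent $\text{Exp}(c)$ waiting times, and is therefore Gamma-distributed and almost surely finite. Hence $M$ is almost surely finite, and for any $\eps > 0$ we can fix a constant $K > 0$, depending on $\eps, m, c, d$ but not on $s$, such that $P(M > K) < \eps/2$.

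Next I would show $U_m^{s+1} \to \infty$ in probability. After $u = 0$, $R(u)$ is a Yule process starting from $m$ with birth rate $kc$ in state $k$, so the waiting time from $k$ to $k+1$ branches is an independent $\text{Exp}(kc)$ random variable. Thus
\begin{equation*}
U_m^{s+1} = \sum_{k=m}^{s} T_k, \qquad T_k \sim \text{Exp}(kc) \text{ independent}.
\end{equation*}
Then $E[U_m^{s+1}] = (1/c)\sum_{k=m}^{s} k^{-1}$ diverges like $(\log s)/c$, while $\Var(U_m^{s+1}) = (1/c^2)\sum_{k=m}^{s} k^{-2}$ stays bounded as $s \to \infty$. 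By Chebyshev's inequality, $U_m^{s+1} \to \infty$ in probability, which is the classical non-explosion of the Yule process. In particular, for the constant $K$ fixed above, there exists $s_0$ such that $P(U_m^{s+1} \leqslant K) < \eps/2$ for all $s \geqslant s_0$.

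Combining via a union bound, for all $s \geqslant s_0$,
\begin{equation*}
P(U_m^{s+1} < M) \leqslant P(U_m^{s+1} \leqslant K) + P(M > K) < \eps,
\end{equation*}
which proves the lemma since $\eps$ was arbitrary. I do not expect any substantive obstacle: the content of the lemma is essentially the qualitative fact that a Yule process takes arbitrarily long to accumulate arbitrarily many branches, whereas the depth-$d$ portion of a fixed finite collection of subtrees is built up in a stochastically bounded time window.
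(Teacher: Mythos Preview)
Your proposal is correct and follows essentially the same approach as the paper: both show that the maximum $M$ is tight (via the Gamma$(d+1,c)$ law of each $V^{l,m}_{i_1\ldots i_d}$ and a union bound over the $m\cdot 2^d$ indices), that $U_m^{s+1}\to\infty$ in probability, and then combine by a union bound at a fixed threshold. The only minor difference is in the second step: the paper invokes the explicit distribution $P(R(u)=i\mid R(0)=m)=\binom{i-1}{i-m}e^{-cmu}(1-e^{-cu})^{i-m}$ to argue $P(U_m^{s+1}<\bar u)\to 0$, whereas you use the representation $U_m^{s+1}=\sum_{k=m}^s T_k$ with $T_k\sim\text{Exp}(kc)$ and Chebyshev's inequality, which is slightly more elementary and is in fact the same device the paper later uses for $\tau_{m_j}^{s_j}$ in the proof of Theorem~\ref{yulekingmantheo31}.
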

\begin{proof}
Since $V_{i_{1}\ldots i_{d}}^{l,m}$ is a sum of $d+1$ independent exponential random variables with parameter $c$, we have $V_{i_{1}\ldots i_{d}}^{l,m}\sim Gamma(d+1,c)$. For a given time $u$, we have
\begin{equation}\label{useintheo5111}
P\left(V_{i_{1}\ldots i_{d}}^{l,m}> u\right)=e^{-cu}\sum_{i=0}^{d}\frac{(cu)^{i}}{i!}.
\end{equation}
Then
\begin{align*}\label{usedintheo311}
P\left(\max\left\{V_{i_{1}\ldots i_{d}}^{l,m}, i_{1},\ldots,i_{d}\in\{1,2\}, 1\leqslant l\leqslant m\right\}\leqslant u\right)&\geqslant1-\sum_{i_{1}\ldots i_{d},l}P\left(V_{i_{1}\ldots i_{d}}^{l,m}>u\right)\\
&=1-m2^{d}e^{-cu}\sum_{i=0}^{d}\frac{(cu)^{i}}{i!}\\
&\to 1~\text{as}~u\to\infty.
\end{align*}
Then for any $\epsilon>0$, there exists $\bar{u}>0$ such that 
\begin{equation}\label{barusection31}
P\left(\max\left\{V_{i_{1}\ldots i_{d}}^{l,m}, i_{1},\ldots,i_{d}\in\{1,2\},1\leqslant l\leqslant m\right\}\leqslant u\right)>1-\frac{\epsilon}{2},~ \forall u>\bar{u}. 
\end{equation}
Since $\{U^{s+1}_{m}< u\}$ is the event that there are more than $s$ species at time $u$, 
and the number of births before time $u$ in a Yule process starting from $m$ species is given by
$$P(R(u)=i|R(0)=m)={i-1 \choose i-m}e^{-cmu}(1-e^{-cu})^{i-m},$$
we have
\begin{equation}\label{usedintheo5222}
P\left(U^{s+1}_{m}<u\right)=e^{-cmu}\sum_{i=s+1}^{\infty}\binom{i-1}{i-m} (1-e^{-cu})^{i-m}.
\end{equation}
It follows that there exists a $\bar{s}\in\mathbb{N}^+$ such that 
\begin{equation}\label{barssection31}
P(U^{s+1}_{m}<\bar{u})<\frac{\epsilon}{2},~\forall s>\bar{s}.
\end{equation}
By $(\ref{barusection31})$ and $(\ref{barssection31})$, for any $\epsilon>0$, there exists $\bar{s}$ such that 
$$P\left(U^{s+1}_{m}\geqslant\max\left\{V_{i_{1}\ldots i_{d}}^{l,m}, i_{1},\ldots,i_{d}\in\{1,2\},1\leqslant l\leqslant m\right\}\right)>1-\epsilon,~\forall s>\bar{s}.$$ 
The result of the lemma follows.
\end{proof}

\subsection{Proof of Theorem \ref{Conv}}

If we follow the Yule tree in reverse time from time $u=U^{s+1}_{m}$ down to time $u=0$, we can get a pure death process starting from $s$ species at time $t=0$. The time for the number of species to reach $m-1$ will be $\tau_{m}^{s}$, which is defined in Theorem \ref{Conv}, and we have
\begin{equation}\label{Taums} 
\tau_{m}^{s}=_d U^{s+1}_{m}.
\end{equation}
Note that the time $u=0$ corresponds to the time $t=(\tau_m^s)^-$.

\begin{proof}[Proof of Theorem \ref{Conv}]
Recall that $\{F^n_c(\delta_1)\}_{n=1}^\infty$ is an increasing sequence of distributions which converges weakly to $\mu_c^*$ and $\{F^n_c(\delta_\infty)\}_{n=1}^\infty$ is a decreasing sequence of distributions which converges weakly to $\mu_c^*$. Then for every $x>0$, for any $\epsilon>0$, there exists $\bar{d}\in\mathbb{N}^+$ such that \begin{equation*}
0\leqslant\left(F^{d+1}_c(\delta_1)\right)([0,x])-\mu_c^*([0,x])<\frac{\epsilon}{2}, ~\forall d\geqslant\bar{d}, 
\end{equation*}
and
\begin{equation*}
0\geqslant \left(F^{d+1}_c(\delta_\infty)\right)([0,x])-\mu_c^*([0,x])>-\frac{\epsilon}{2}, ~\forall d\geqslant\bar{d}, 
\end{equation*}
By Lemma \ref{lemma12resultonyule} and $(\ref{Taums})$, for any $\bar{d}\in\mathbb{N}^+$, there exists $\bar{s}\in\mathbb{N}^+$ such that 
\begin{equation*}\label{T12theo5}
P\left(\tau_{m}^{s}\geqslant\max\left\{V_{i_{1}\ldots i_{\bar{d}}}^{l,m}, i_{1},\ldots,i_{\bar{d}}\in\{1,2\}, 1\leqslant l\leqslant m\right\}\right)>1-\frac{\epsilon}{2},~ \forall s\geqslant \bar{s}.
\end{equation*}
Let 
$$A\coloneqq\left\{\tau_{m}^{s}\geqslant\max\left\{V_{i_{1}\ldots i_{\bar{d}}}^{l,m}, i_{1},\ldots,i_{\bar{d}}\in\{1,2\}, 1\leqslant l\leqslant m\right\}\right\}.$$ 
Then $P\left(A^C\right)<\epsilon/2$.
Since the number of individual lineages belonging to each species at any time is at least 1 and at most infinity, we can lower and upper bound the number of individual lineages in each species immediately below any branchpoint at time $V_{i_{1}\ldots i_{\bar{d}}}^{l,m}$, where $i_{1},\ldots,i_{\bar{d}}\in\{1,2\}$ and $1\leqslant l\leqslant m$, by 1 and $\infty$ respectively. It follows that the number of individual lineages in each species immediately below any branchpoint at time $V_{i_{1}\ldots i_{\bar{d}-i}}^{l,m}$ can be lower bounded by some random variable with distribution $F^i_c(\delta_1)$ and upper bounded by some random variable with distribution $F^i_c(\delta_\infty)$. Note that on event $A$, all times $V_{i_{1}\ldots i_{\bar{d}}}^{l,m},i_{1},\ldots,i_{\bar{d}}\in\{1,2\},1\leqslant l\leqslant m$ are less than $\tau_{m}^{s}$. Therefore, there exist independent random variables $X_l^{\bar{d}+1}$, $l\in\{1,\ldots,m\}$ with distribution $F^{\bar{d}+1}_c(\delta_1)$ and independent random variables $Y_l^{\bar{d}+1}$, $l\in\{1,\ldots,m\}$ with distribution $F^{\bar{d}+1}_c(\delta_\infty)$ such that $X_l^{\bar{d}+1}\leqslant N_{k_l}\left(\left(\tau_{m}^{s}\right)^-\right)\leqslant Y_l^{\bar{d}+1}$ for all $l\in\{1,\ldots,m\}$ on event $A$. It follows that for any $x>0$ and $l\in\{1,\ldots,m\}$, we have
$$P\left(\left\{N_{k_l}\left(\left(\tau_{m}^{s}\right)^-\right)\leqslant x\right\}\cap A\right)\leqslant P\left(\left\{X_l^{\bar{d}+1}\leqslant x\right\}\cap A\right)\leqslant P\left(X_l^{\bar{d}+1}\leqslant x\right)= \left(F^{\bar{d}+1}_c(\delta_1)\right)([0,x]),$$
and
\begin{align*}
	P\left(\left\{N_{k_l}\left(\left(\tau_{m}^{s}\right)^-\right)\leqslant x\right\}\cap A\right)&\geqslant P\left(\left\{Y_l^{\bar{d}+1}\leqslant x\right\}\cap A\right)\\
	&\geqslant P\left(Y_l^{\bar{d}+1}\leqslant x\right)-P\left(A^C\right)\\
	&= \left(F^{\bar{d}+1}_c(\delta_\infty)\right)([0,x])-P\left(A^C\right).
\end{align*}
Then
\begin{align}\label{prooftheo2eq101}
 P\left(N_{k_l}\left(\left(\tau_{m}^{s}\right)^-\right)\leqslant x\right)-\mu_c^*([0,x])&\leqslant  P\left(\left\{N_{k_l}\left(\left(\tau_{m}^{s}\right)^-\right)\leqslant x\right\}\cap A\right)+P\left(A^C\right)-\mu_c^*([0,x])\nonumber\\
&\leqslant \left(F^{\bar{d}+1}_c(\delta_1)\right)([0,x])+P\left(A^C\right)-\mu_c^*([0,x])\nonumber\\
&<\epsilon, ~ \forall s\geqslant\bar{s}, ~\forall l\in\{1,\ldots,m\},
\end{align}
and
\begin{align}\label{prooftheo2eq202}
 P\left(N_{k_l}\left(\left(\tau_{m}^{s}\right)^-\right)\leqslant x\right)-\mu_c^*([0,x])& \geqslant  P\left(\left\{N_{k_l}\left(\left(\tau_{m}^{s}\right)^-\right)\leqslant x\right\}\cap A\right)-\mu_c^*([0,x])\nonumber\\
&\geqslant  \left(F^{\bar{d}+1}_c(\delta_\infty)\right)([0,x])-P\left(A^C\right)-\mu_c^*([0,x])\nonumber\\
& >-\epsilon, ~ \forall s\geqslant\bar{s}, ~\forall l\in\{1,\ldots,m\}.
\end{align}
The result 
in Theorem \ref{Conv} follows from (\ref{prooftheo2eq101}) and (\ref{prooftheo2eq202}).

It remains to show the asymptotic independence.  
Let $\{a_1,\ldots,a_m\}$ be an arbitrary point of $\left(\mathbb{N}^+\right)^m$. Then
\begin{align*}
	&P\left(N_{k_{l}}\left(\left(\tau_{m}^{s}\right)^-\right)\leqslant a_l,\forall l\in\{1,\ldots,m\}\right)\nonumber\\
	&\quad\quad\leqslant P\left(\left\{N_{k_{l}}\left(\left(\tau_{m}^{s}\right)^-\right)\leqslant a_l,\forall l\in\{1,\ldots,m\}\right\}\cap A\right)+P\left(A^C\right)\nonumber\\
	&\quad\quad\leqslant P\left(\left\{X^{\bar{d}+1}_l\leqslant a_l,\forall l\in\{1,\ldots,m\}\right\}\cap A\right)+P\left(A^C\right)\nonumber\\
	&\quad\quad\leqslant P\left(X^{\bar{d}+1}_l\leqslant a_l,\forall l\in\{1,\ldots,m\}\right)+P\left(A^C\right)\nonumber\\
	&\quad\quad\leqslant \prod_{l=1}^m \left(F^{\bar{d}+1}_c(\delta_1)\right)([0,a_l])+P(A^C)\nonumber\\
	&\quad\quad\leqslant \prod_{l=1}^m\left(\mu_c^*([0,a_l])+\frac{\epsilon}{2}\right)+\frac{\epsilon}{2}, ~ \forall s\geqslant\bar{s},
\end{align*}
and
\begin{align*}
	P\left(N_{k_{l}}\left(\left(\tau_{m}^{s}\right)^-\right)\leqslant a_l,\forall l\in\{1,\ldots,m\}\right)&\geqslant P\left(\left\{N_{k_{l}}\left(\left(\tau_{m}^{s}\right)^-\right)\leqslant a_i,\forall l\in\{1,\ldots,m\}\right\}\cap A\right)\nonumber\\
	&\geqslant P\left(\left\{Y^{\bar{d}+1}_l\leqslant a_l,\forall l\in\{1,\ldots,m\}\right\}\cap A\right)\nonumber\\
	&\geqslant P\left(Y^{\bar{d}+1}_l\leqslant a_l,\forall l\in\{1,\ldots,m\}\right)-P\left(A^C\right)\nonumber\\
	&\geqslant \prod_{l=1}^m \left(F^{\bar{d}+1}_c(\delta_\infty)\right)([0,a_l])-P(A^C)\nonumber\\
	&\geqslant \prod_{l=1}^m\left(\mu_c^*([0,a_l])-\frac{\epsilon}{2}\right)-\frac{\epsilon}{2}, ~ \forall s\geqslant\bar{s}.
\end{align*}
Since $\epsilon$ can be arbitrarily small, we have
$$\lim_{s\to\infty}P\left(N_{k_{l}}\left(\left(\tau_{m}^{s}\right)^-\right)\leqslant a_l,~\forall l\in\{1,\ldots,m\}\right)= \prod_{l=1}^m\mu_c^*([0,a_l]),$$
which implies that $N_{k_l}((\tau_{m}^{s})^{-})$, $l\in\{1,\ldots,m\}$ are asymptotically independent random variables as $s\to\infty$.
\end{proof}

\subsection{Proof of Theorem \ref{yulekingmantheo31}}
\begin{proof}[Proof of Theorem \ref{yulekingmantheo31}]
	Note that $\tau_s^s$ is the time of the death of the first of the $s$ species and $\tau_{i}^s-\tau_{i+1}^s$ is the time between the death of the $(i+1)$st to last species and the death of the $i$th to last species. It follows that $\tau_s^s\sim\text{Exp}(sc)$, $\tau_{i}^s-\tau_{i+1}^s\sim\text{Exp}(ic)$, and $\tau_s^s$, $\tau_{i}^s-\tau_{i+1}^s$, $i\in\{1,\ldots,s-1\}$ are independent random variables. Therefore,
	$$E[\tau_{m_j}^{s_j}]=\sum_{i=m_j}^{s_j} \frac{1}{ic}\to\infty,~\text{as }j\to\infty,$$
	and 
	$$Var(\tau_{m_j}^{s_j})=\sum_{i=m_j}^{s_j} \frac{1}{(ic)^2}<\sum_{i=1}^{\infty} \frac{1}{i^2c^2}.$$
It follows from Chebyshev's Inequality that for any $\bar{u}>0$, $\epsilon>0$, there exists $J$ such that 
\begin{equation}\label{jJtheo3111}
P(\tau_{m_j}^{s_j}>\bar{u})>1-\frac{\epsilon}{2},~\forall j>J.
\end{equation}
Let $\{k_l\}_{l=1}^{m_j}\subset[s_j]$ denote the species that survive to $(\tau_{m_j}^{s_j})^{-}$. On the event $\left\{\tau_{m_j}^{s_j}>\bar{u}\right\}$, consider one of the $m_j$ subtrees, denoted by $\mathcal{T}^{l, m_j}$. Denote the number of species in the subtree at time $u=\bar{u}$ as $S_{\bar{u}}$. Since each species gives birth to a new species at rate $c$, the random variable $S_{\bar{u}}$ is geometrically distributed with mean $e^{c\bar{u}}$. Recall from section 1 
that $K_{\infty}(t)$ is the number of lineages at time $t$ in Kingman's coalescent and $E[K_{\infty}(t)]$ is finite for all $t>0$. Since the number of individual lineages in each species at time $u=\bar{u}$ is at most $\infty$, on the event $\left\{\tau_{m_j}^{s_j}>\bar{u}\right\}$, we can upper bound $N_{k_{l}}\left(\left(\tau_{m_j}^{s_j}\right)^-\right)$ by $S_{\bar{u}}K_{\infty}(\bar{u})$ for all $l\in\{1,\ldots,m\}$, 
where $S_{\bar{u}}K_{\infty}(\bar{u})$ is a random variable with finite mean.
 Note that the $m_j$ subtrees between time $u=0$ and $u=\bar{u}$ are independent. Then there exist independent random variables $W_l$, $l\in\{1,\ldots,m_j\}$ with the same distribution as $S_{\bar{u}}K_{\infty}(\bar{u})$ such that  
\begin{equation*}
N_{k_{l}}\left(\left(\tau_{m_j}^{s_j}\right)^-\right)\leqslant 	W_l,~\forall l\in\{1,\ldots,m_j\}.
\end{equation*}

By (\ref{useintheo5111}), we have
\begin{equation*}\label{maxvjtheo311}
P\left(\max\left\{V_{i_{1}\ldots i_{d}}^{l,m_j}, i_{1},\ldots,i_{d}\in\{1,2\}\right\}\leqslant u\right)\geqslant 1-2^{d}e^{-cu}\sum_{i=0}^{d}\frac{(cu)^{i}}{i!}\to 1~\text{as}~u\to\infty,
\end{equation*}
which implies that for any $d>0$, $\epsilon>0$ and $n>0$, there exists $\bar{u}>0$ such that
\begin{equation*}\label{maxvjtheo311'}
P\left(\max\left\{V_{i_{1}\ldots i_{d}}^{l,m_j}, i_{1},\ldots,i_{d}\in\{1,2\}\right\}> \bar{u}\right)< \frac{\epsilon}{2n}.
\end{equation*}
Let $A_{l,j,d}\coloneqq \left\{\max\left\{V_{i_{1}\ldots i_{d}}^{l,m_j}, i_{1},\ldots,i_{d}\in\{1,2\}\right\}>\bar{u}\right\}$. Then $A_{l,j,d}$ are independent events for all $l\in\{1,\ldots,m_j\}$. Let $K$ denote the number of values of $l$ such that the event $A_{l,j,d}$ occurs. Then by Markov's inequality,
\begin{equation}\label{jJtheo3222}
P\left(K>\frac{m_j}{n}\right)<\frac{P(A_{l,j,d})m_j}{m_j/n}<\frac{\epsilon}{2}.
\end{equation}
Let $S\subset\{1,\ldots,m_j\}$ denote the set of the values of $l$ such that the event $A_{l,j,d}$ occurs. Denote the cardinality of $S$ as $|S|$. Then $K=|S|$. 
Since the number of individual lineages belonging to each species at any time is at least 1 and at most $\infty$, there exists a random variable $X_l^{d+1}$ with distribution $F^{d+1}_c(\delta_1)$ and a random variable $Y_l^{d+1}$ with distribution $F^{d+1}_c(\delta_\infty)$ such that on the event $A_{l,j,d}^C$, we have $X_l^{d+1}\leqslant N_{k_l}\left(\left(\tau_{m_j}^{s_j}\right)^-\right)\leqslant Y_l^{d+1}$. 
Since the subtrees between time $u=0$ and $u=\bar{u}$ are independent, there exist independent random variables $X_l^{d+1}$, $l\in\{1,\ldots,m_j\}$ and independent random variables $Y_l^{d+1}$, $l\in\{1,\ldots,m_j\}$ such that on $\left\{\tau_{m_j}^{s_j}>\bar{u}\right\}\cup\left\{K>m_j/n\right\}$,
\begin{equation}\label{theo3eq811}
\frac{1}{m_{j}}\sum_{k=1}^{s_{j}}N_{k}\left(\left(\tau_{m_{j}}^{s_{j}}\right)^{-}\right)\geqslant \frac{1}{m_{j}}\sum_{l\notin S}X_l^{d+1},
\end{equation}
and
\begin{equation}\label{theo3eq822} 
\frac{1}{m_{j}}\sum_{k=1}^{s_{j}}N_{k}\left(\left(\tau_{m_{j}}^{s_{j}}\right)^{-}\right)\leqslant \frac{1}{m_{j}}\left(\sum_{l=1}^{m_j}Y_l^{d+1}+\sum_{l\in S}W_l\right).
\end{equation}
By the weak law of large numbers, we have
$$\frac{1}{m_{j}}\sum_{l=1}^{m_j}X_l^{d+1}\to_p E\left[X_l^{d+1}\right],\text{ as }j\to\infty,$$
and
$$\frac{1}{m_{j}}\sum_{l=1}^{m_j}Y_l^{d+1}\to_p E\left[Y_l^{d+1}\right],\text{ as }j\to\infty.$$
Since $P(K>m_j/n)<\epsilon/2$ and $K=|S|$, by Markov's inequality, we have
\begin{align*}
	P\left(\frac{1}{m_{j}}\sum_{l\in S}W_l>\epsilon\right)&\leqslant P\left(\left\{\frac{K}{m_{j}}\frac{1}{|S|}\sum_{l\in S}W_l >\epsilon\right\}\cap\left\{K\leqslant \frac{m_j}{n}\right\}\right)+P\left(K> \frac{m_j}{n}\right)\\
	&< P\left(\frac{1}{n}\frac{1}{|S|}\sum_{l\in S}W_l>\epsilon\right)+\frac{\epsilon}{2}\\
	&\leqslant \frac{E\left[\frac{1}{n}\frac{1}{|S|}\sum_{l\in S}W_l\right]}{\epsilon}+\frac{\epsilon}{2}.
\end{align*}
Recall that $W_l,l\in\{1,\ldots,m\}$ have the same distribution as $S_{\bar{u}}K_{\infty}(\bar{u})$, which is positive and has finite mean. Then for any given $\epsilon>0$, by choosing sufficiently large $n$, the right-hand side of the inequality above can be less than $\epsilon$, which implies that 
$$\frac{1}{m_{j}}\sum_{l\in S}W_l\to_p0,\text{ as }j\to\infty.$$
Since $X_l^{d+1},l\in\{1,\ldots,m\}$ are positive and have finite mean, by a similar argument to that used above, we have
$$\frac{1}{m_{j}}\sum_{l\in S}X_l^{d+1}\to_p0,\text{ as }j\to\infty.$$
Therefore,
\begin{equation}\label{theo3eq833}
\frac{1}{m_{j}}\sum_{l\notin S}X_l^{d+1}=\frac{1}{m_{j}}\sum_{l=1}^{m_j}X_l^{d+1}-\frac{1}{m_{j}}\sum_{l\in S}X_l^{d+1}\to_p E\left[X_l^{d+1}\right],\text{ as }j\to\infty,
\end{equation}
and
\begin{equation}\label{theo3eq844}
\frac{1}{m_{j}}\left(\sum_{l=1}^{m_j}Y_l^{d+1}+\sum_{l\in S}W_l\right)=\frac{1}{m_{j}}\sum_{l=1}^{m_j}Y_l^{d+1}+\frac{1}{m_{j}}\sum_{l\in S}W_l\to_p E\left[Y_l^{d+1}\right],\text{ as }j\to\infty.
\end{equation}
Recall from (\ref{Xinfinitynew1}) that
$$\left(F^2_c(\delta_\infty)\right)([i,\infty))\leqslant \frac{16c^{2}}{(i-1)(i-2)}, ~\forall i\geqslant4.$$
Since $\{F^d_c(\delta_\infty)\}_{d=2}^\infty$ is a decreasing sequence of distributions, we have
$$\lim_{a\to\infty}\sup_{d\geqslant2} E\left[\left|Y_l^d\right|\bold{1}_{\{\left|Y_l^d\right|\geqslant a\}}\right]=\lim_{a\to\infty}E\left[\left|Y_l^2\right|\bold{1}_{\{\left|Y_l^2\right|\geqslant a\}}\right] =0,$$
which implies $Y_l^d$ are uniformly integrable. Since $F^d_c(\delta_\infty)\to_p \mu_c^*$, by Theorem 25.12 in \cite{billingsley1995probability}, we have 
\begin{equation}\label{theo3eq855}
\lim_{d\to\infty} E\left[Y_l^{d+1}\right]=\sum_{i=1}^\infty i\mu_c^*(\{i\}).
\end{equation}
Similarly, we have
\begin{equation}\label{theo3eq866}
\lim_{d\to\infty} E\left[X_l^{d+1}\right]=\sum_{i=1}^\infty i\mu_c^*(\{i\}).
\end{equation}
By (\ref{theo3eq811}), (\ref{theo3eq822}), (\ref{theo3eq833}), (\ref{theo3eq844}), (\ref{theo3eq855}) and (\ref{theo3eq866}), we have
\begin{align*}
	\lim_{j\to\infty}P\left(\left\{\left|\frac{1}{m_{j}}\sum_{k=1}^{s_{j}}N_{k}\left(\left(\tau_{m_{j}}^{s_{j}}\right)^{-}\right)-\sum_{i=1}^\infty i\mu_c^*(\{i\})\right|>\epsilon\right\}\cap \left(\left\{\tau_{m_j}^{s_j}>\bar{u}\right\}\cup\left\{K>\frac{m_j}{n}\right\}\right)\right)=0.
\end{align*}
By (\ref{jJtheo3111}) and (\ref{jJtheo3222}), we have 
$$P\left(\left(\left\{\tau_{m_j}^{s_j}>\bar{u}\right\}\cap\left\{K>\frac{m_j}{n}\right\}\right)^C\right)<\epsilon,~\forall j>J.$$
Since $\epsilon$ can be arbitrarily small, the result follows.	
\end{proof}

\subsection{An Example of $\mu_c^*$}
\begin{Theo}
When $c=1$, we have the unique distribution $\mu_c^*$ in Theorem \ref{existunique},
$$\mu_1^*(\{i\})=\frac{2i-1}{3^{i}}, \forall i\in\mathbb{N}^+.$$
\end{Theo}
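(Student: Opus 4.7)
The plan is to exhibit the distribution $\mu$ with $\mu(\{i\})=(2i-1)/3^i$ explicitly, verify that its probability generating function satisfies the ODE from Lemma~\ref{ProbGenFunEqu1} at $c=1$, and invoke the uniqueness in Theorem~\ref{existunique}. As a first step I would sum the series: splitting $\sum_{i\geqslant1}\frac{2i-1}{3^i}x^i$ into $2\sum_{i\geqslant 1} i (x/3)^i-\sum_{i\geqslant 1} (x/3)^i$ and using the standard identities for the (arithmetico-)geometric series, one obtains the closed form
$$R(x)=\frac{x(3+x)}{(3-x)^2}.$$
In particular $R(0)=0$, $R(1)=1$, and a short computation gives $R'(x)=9(1+x)/(3-x)^3>0$ on $[0,1]$ with $R'(1)=9/4<\infty$, so the candidate $\mu$ is a genuine element of $\mathcal{S}_1$.

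Differentiating once more gives $R''(x)=18(3+x)/(3-x)^4$, and the algebraic simplification $(3-x)^2-x(3+x)=9(1-x)$ then yields
$$R(x)-R(x)^2=\frac{9x(1-x)(3+x)}{(3-x)^4}=\frac{x(1-x)}{2}R''(x),$$
which is exactly the ODE of Lemma~\ref{ProbGenFunEqu1} at $c=1$.

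The remaining (and only non-routine) step is to pass from the ODE back to the RDE~(\ref{First}), since Lemma~\ref{ProbGenFunEqu1} only states the forward direction. Let $\tilde R$ denote the PGF of $F_1(\mu)$. Rerunning the derivation in the proof of Lemma~\ref{ProbGenFunEqu1}, but starting from~(\ref{probKingman11}) rather than~(\ref{Kxxi12}), one obtains $\tilde R(x)=R(x)^2+(x(1-x)/2)\tilde R''(x)$. Subtracting the ODE already established for $R$, the difference $D:=\tilde R-R$ satisfies the linear equation $2D(x)=x(1-x)D''(x)$ with $D(0)=D(1)=0$. Matching Taylor coefficients gives the one-term recursion $d_{i+1}=\frac{i(i-1)+2}{i(i+1)}d_i$, so $d_i=d_1\prod_{j=1}^{i-1}\frac{j(j-1)+2}{j(j+1)}$; each factor is positive and behaves like $1-2/j+O(1/j^2)$, so $\sum_{i\geqslant 1}(d_i/d_1)$ is finite and strictly positive. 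The boundary condition $D(1)=0$ then forces $d_1=0$, hence $D\equiv 0$ and $F_1(\mu)=\mu$. By Theorem~\ref{existunique}, this identifies $\mu$ with $\mu_1^*$.

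The main obstacle is precisely this last step: once one knows the ODE has at most one PGF solution in the relevant class, identifying it from the closed form above is a routine algebraic check, and the content of the argument lies in using the normalization $D(1)=0$ to rigidify the solution of the linear ODE $2D=x(1-x)D''$.
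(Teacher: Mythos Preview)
Your proof is correct and follows essentially the same route as the paper: both verify that the closed-form PGF $R(x)=x(3+x)/(3-x)^2$ solves the ODE of Lemma~\ref{ProbGenFunEqu1}, then close the gap between the ODE and the RDE via the sign-preserving one-term recursion combined with the normalization condition, and finally invoke Theorem~\ref{existunique}. The paper phrases the last step directly on the probability masses (computing the convolution $P(W_1+W_2=i)$ explicitly and arguing that a strict inequality at $i=1$ propagates and contradicts $\sum_i P=1$), whereas you package the same idea via the linear ODE $2D=x(1-x)D''$ for $D=\tilde R-R$; note incidentally that your asymptotic $d_i/d_1\asymp i^{-2}$ is not needed, since $d_i$ is a difference of two pmfs and $\sum_i|d_i|\leqslant2$ automatically.
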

\begin{proof}
When $c=1$, it follows from Lemma \ref{ProbGenFunEqu1} that the probability generating function of $\mu_1^*$, denoted by $R$, satisfies 
$$R(x)=R(x)^2+\frac{x}{2}R''(x)-\frac{x^2}{2}R''(x),~\forall x\in[0,1],~ R(0)=0,~\text{and } R(1)=1. $$
It is easy to check that 
$$R(x)=\frac{x^2+3x}{(x-3)^2}$$
is a solution to the ODE. Note that 
$$\frac{x^2+3x}{(x-3)^2}=\sum_{i=1}^{\infty}\left(\frac{2i-1}{3^{i}}\right)x^{i}.$$
Let $W_1$ and $W_2$ be two independent random variables with $P(W_n=i)=(2i-1)/3^i$. Then $W_1+W_2$ has probability generating function $R^2(x)=(x^2+3x)^2/(x-3)^4$, which implies 
$$P(W_1+W_2=i)=\frac{2i^3-6i^2+7i-3}{3^{1+i}}.$$
Recall from (\ref{probKingman11}) that for all $i\in\mathbb{N}^+$,
$$ P(K_{W_1+W_2}(Y)=i)=\frac{(i+1)i}{i(i-1)+2}P(K_{W_1+W_2}(Y)=i+1)+\frac{2}{i(i-1)+2}P(W_1+W_2=i).$$
It is easy to check that for all $i\in\mathbb{N}^+$,
$$\frac{2i-1}{3^{i}}=\frac{(i+1)i}{i(i-1)+2}\frac{2(i+1)-1}{3^{i+1}}+\frac{2}{i(i-1)+2}P(W_1+W_2=i).$$
If $P(K_{W_1+W_2}(Y)=1)<(2-1)/3^1$, then by induction, we have $P(K_{W_1+W_2}(Y)=i)<(2i-1)/3^{i}$ for all $i\in\mathbb{N}^+$. It follows that 
$$\sum_{i=1}^\infty P(K_{W_1+W_2}(Y)=i)<\sum_{i=1}^\infty\frac{2i-1}{3^{i}}=1,$$
which contradicts $\sum_{i=1}^\infty P(K_{W_1+W_2}(Y)=i)=1$. We can generate a similar contradiction if $P(K_{W_1+W_2}(Y)=1)>(2-1)/3^1$. Therefore, we have $P(K_{W_1+W_2}(Y)=1)=(2-1)/3^1$, and by induction $P(K_{W_1+W_2}(Y)=i)=(2i-1)/3^{i}$ for all $i\in\mathbb{N}^+$, which implies 
$$\mu_1^*(\{i\})=\frac{2i-1}{3^{i}}, \forall i\in\mathbb{N}^+$$
is a solution to the recursive distributional equation (\ref{First}). By Theorem \ref{existunique}, the solution to the RDE is unique. 
The result then follows.
\end{proof}

We have the nice form of the $\mu_c^*$ in Theorem 1 only when $c=1$. We do not have a closed-form expression for $\mu_c^*$ in any other cases.

\section*{Acknowledgements}
The author would like to express sincere gratitude and appreciation to Professor Jason Schweinsberg for his invaluable guidance and unwavering encouragement. His methodological support, constructive suggestions and valuable feedback have played an important role in the development of this paper.

\bibliographystyle{abbrv}
\bibliography{references.bib}

\end{document}